    \newtheorem{theorem}{Theorem}[section]
\newtheorem{lemma}{Lemma}[section]
\newtheorem{assumption}{Assumption}[section]
\newtheorem{remark}{Remark}[section]
\newtheorem{Definition}{Definition}[section]
\def\text#1{\hbox{#1}}
\newcommand{\bsub}{\begin{subequations}}
	\newcommand{\esub}{\end{subequations}$\!$}
\newcommand{\revise}[1]{{#1}}
\renewcommand{\figurename}{Fig.}
\numberwithin{equation}{section}
\journal{Journal of Computational and Applied Mathematics}
\begin{document}
\begin{frontmatter}



\title{\revise{New vector transport operators extending a Riemannian CG algorithm to generalized Stiefel manifold with low-rank applications}}


\author[1]{Xuejie Wang}
\ead{wangxuejie0526@gmail.com}

\author[2]{Kangkang Deng \corref{cor1}}
\ead{freedeng1208@gmail.com}

\author[1]{Zheng Peng}
\ead{pzheng@xtu.edu.cn}

\author[1]{Chengcheng Yan}
\ead{ycc956176796@gmail.com}

\address[1]{School of Mathematics and Computational Science, Xiangtan University, Xiangtan 411105, China}
\address[2]{Department of Mathematics, National University of Defense Technology, Changsha 410073, China}

\cortext[cor1]{Corresponding author}


\begin{abstract}
This paper proposes two innovative vector transport operators, leveraging the Cayley transform, for the generalized Stiefel manifold embedded with a non-standard metric. Specifically, it introduces the differentiated retraction and an approximation of the Cayley transform to the differentiated matrix exponential. These vector transports are demonstrated to satisfy the Ring-Wirth non-expansive condition under non-standard metrics, and one of them is also isometric. Building upon the novel vector transport operators, we extend the modified Polak-\revise{Ribi$\grave{e}$re}-Polyak (PRP) conjugate gradient method to the generalized Stiefel manifold. Under a non-monotone line search condition, we prove our algorithm globally converges to a stationary point.   The efficiency of the proposed vector transport operators is empirically validated through numerical experiments involving generalized eigenvalue problems and canonical correlation analysis. 
\end{abstract}



\begin{keyword}


Riemannian conjugate gradient method \sep
generalized Stiefel manifold \sep 
Cayley transform \sep 
vector transport

\MSC 90C26 \sep 90C30 \sep 90C15 \sep 90C06 \sep 90C90 

\end{keyword}

\end{frontmatter}



\section{Introduction}

Manifold optimization has recently drawn a lot of research attention because of its success in a variety of important applications, including low-rank tensor completion \cite{song2023riemannian}, inverse eigenvalue problems \cite{zhao2022riemannian}, distributed learning \cite{montufar2021distributed,deng2023decentralized}, etc. Many classical optimization methods for smooth optimization problems in Euclidean space have been successfully generalized to the problems on Riemannian manifolds \cite{grocf,zhao2023generalized,HuaAbsGal2018,hu2018adaptive,bortoloti2022efficient,deng2022manifold}. The reader is referred to  \cite{boumal2023intromanifolds,sato2021riemannian,hu2020brief} for a comprehensive review. In the paper, we consider the following optimization problem over the generalized Stiefel manifold:
\revise{$$
\min\limits_{X \in \mathbb{R}^{n \times p}} f(X) \quad \text { s.t.  } X\in \operatorname{St}_M(n,p),
$$}
where $\operatorname{St}_M(n,p): = \{X \in \mathbb{R}^{n \times p}~|~X^{\top}M X = I_p \}$, $I_p$ is the identity matrix of size $p\times p$ and $M$ is an $n \times n$ symmetric positive definite matrix, the objective function $f:\mathbb{R}^{n\times p} \rightarrow \mathbb{R}$ is a continuously differentiable function. If $M = I_p$, the generalized Stiefel manifold is reduced to the Stiefel manifold $\operatorname{St}(n, p): = \{X \in \mathbb{R}^{n \times p}~|~X^{\top} X = I_p \}$.  These problems involving generalized orthogonal constraints have gained popularity due to their wide range of applications in science and engineering. Examples of such applications include generalized eigenvalue problems \cite{36,absil2006truncated} in aerospace research and computational physics, canonical correlation analysis (CCA) \cite{35,deng2022trace} in image processing and data regression analysis, Fisher discriminant analysis \cite{14} in reduction,  and so on.

\subsection{Main contributions}

In this paper, a Riemannian conjugate gradient method is proposed for the optimization problems on the generalized Stiefel manifold under non-standard metrics. This paper has two main contributions.

 \begin{itemize}
     \item We propose two novel vector transports based on the Cayley transform for the generalized Stiefel manifold: the differentiated retraction and an approximation of the Cayley transform to the differentiated matrix exponential. Importantly, we demonstrate that,  under non-standard metrics, these vector transports fulfill the Ring-Wirth non-expansive condition, and one of them is also isometirc. The property is pivotal for ensuring the global convergence of the Riemannian conjugate gradient method on the generalized Stiefel manifold, which is a significant advancement in the field.
     \item Building upon these novel vector transport operators, we extend the modified Polak-\revise{Ribi$\grave{e}$re}-Polyak (PRP) conjugate gradient method \cite{zhang2009improved} to the generalized Stiefel manifold. Under a non-monotone line search condition, \revise{the presented algorithm is proved globally convergent at a stationary point.} Furthermore, the empirical validation of the proposed vector transport operators through numerical experiments involving generalized eigenvalue problems and canonical correlation analysis underscores their efficiency and practical applicability. 
 \end{itemize}

\subsection{Related works}

\subsubsection{Riemannian Optimization on the (generalized) Stiefel manifold.} Optimization under orthogonality constraints is widely used in many scientific fields, thus leading to the natural emergence of Riemannian optimization on (generalized) Stiefel manifolds. The concept of Riemannian optimization was first introduced by Edelman et al. \cite{13} for solving problems with orthogonal constraints. In particular, the components of Stiefel manifolds are developed based on standard (as well as canonical) metrics. Retraction, an important concept in Riemannian manifold optimization, is seen as a first-order approximation of the Riemannian exponential mapping in \cite{12}. Polar decomposition and QR decomposition are two common methods for constructing retractions on Stiefel manifolds. The former involves projecting tangent vectors onto the manifold \cite{1}, while the latter is a practical method used in various applications, such as Kohn-Sham total energy minimization in electronic structure calculations \cite{38}. Moreover, polar-based retractions on generalized Stiefel manifolds $\operatorname{St}_M(n, p)$ have been proposed \cite{7,24,35}. Recent works have also explored the use of the Cayley transform as an alternative method for constructing retractions, leading to more efficient algorithms \cite{22,34,39}. For instance, Sato and Aihara \cite{29} proposed the Cholesky QR-based retraction on generalized Stiefel manifolds as an improved calculation approach. Additionally, Kaneko et al. \cite{21} proposed an algorithm for calculating inverse retractions on Stiefel manifolds to solve the empirical arithmetic averaging problem. Furthermore, several optimization algorithms for non-smooth optimization on Stiefel manifolds have been developed, including the fast iterative shrinkage proximal gradient method, as discussed in \cite{8,9,19}. 
It is worth noting that the majority of current algorithms rely on standard or canonical metrics. Recently, a new metric tailored for the generalized Stiefel manifold $\mathrm{St}_M(n,p)$ has been introduced \cite{32}, incorporating intrinsic information from $M$. As demonstrated in \cite{32}, this metric facilitates better adaptation to the manifold's structure and reduces computational complexities in algorithmic design. Therefore, we also adopt this metric in our paper.

\subsubsection{Riemannian conjugate gradient method.} As we all know, in Euclidean space $\mathbb{R}^n$, the conjugate gradient method is usually better than the steepest descent method due to its faster convergence speed and high stability. In contrast, second-order methods such as Newton methods, quasi-Newton methods, and trust region methods, while having the potential for faster convergence, can be computationally expensive per iteration. Therefore, in contrast to second-order methods, the conjugate gradient method is more suitable for solving large-scale problems. Although the conjugate gradient method has been extended to Riemannian manifolds \cite{sato2023riemannian,10159009,sato2022riemannian,sakai2023global,zhu2021cayley,zhu2020riemannian}, it is still under development. Absil et al. \cite{1} introduced the concept of vector transport, which approximates parallel transport. Utilizing this vector transport through the differential retraction operator is an efficient way to perform the conjugate gradient method on Riemannian manifolds. Ring and Wirth \cite{27} established the global convergence of the Riemannian Fletcher-Reeves conjugate gradient method, assuming that the vector transport used does not increase the norm of the search direction vector. This assumption is referred to as the Ring-Wirth non-expansive condition. To eliminate this impractical assumption from the convergence analysis, Sato and Iwai \cite{30} introduced the notion of scaled vector transport. They demonstrated that, utilizing scaled vector transport allows the Fletcher-Reeves method on the Riemannian manifold to yield a descent direction at each iteration, and converge globally without the Ring-Wirth non-expansive condition. Similarly, Sato \cite{28} proposed a Riemannian conjugate gradient method that only requires weak Wolfe conditions, and proved its global convergence by applying scaled vector transport associated with the differentiated retraction. However, it should be noted that this scaled vector transport breaks the linearity property. In \cite{39}, Zhu introduces two novel vector transports based on the Stiefel manifold and proves that they both satisfy the Ring-Wirth non-expansive condition, and that the second vector transport is also isometric.

\subsubsection{Non-standard Riemannian metric.} In Riemannian manifold optimization, the choice of metric is very important as it can affect the performance and convergence speed of the algorithm \cite{1}. For example, a new softAbs metric for the Riemannian Manifold Hamiltonian Monte Carlo (RMHMC) method was presented in Betancourt (2013) \cite{betancourt2013general}. This metric replaces the original Riemannian metric that had limitations in its application. It has been verified to successfully simulate the distribution of many layers and underlying models. A metric called the K$\ddot{a}$hler metric is introduced in \cite{barbaresco2006information, barbaresco2008interactions}, defined on the tangent space using probability parameters. This metric results in a complete, simply connected manifold with non-positive cross-section curvature everywhere. In \cite{pennec2006riemannian}, an affine-invariant Riemannian metric is introduced in the tensor space, demonstrating strong theoretical properties. The crucial issue of metric selection is discussed in the context of Riemann optimization on a quotient manifold in reference \cite{24}. In the past, solving generalized orthogonality constraint problems generally involved transforming them into orthogonal constraint problems and then solving them using the standard Riemannian metric $\left\langle U,V\right\rangle_X = \operatorname{tr}(U^{\top}V)$. Now, inspired by literature \cite{32}, this paper endows the tangent bundle of $\operatorname{St}_M(n,p)$ with a Riemannian metric, specifically a non-standard metric $\left\langle U,V\right\rangle_X = \operatorname{tr}(U^{\top} MV)$. In particular, in Riemannian optimization problems with generalized orthogonality constraints, using a non-standard metric can reduce computational costs, improve convergence speed, and enhance optimality compared to the standard metric. Therefore, utilizing a non-standard metric can lead to favorable outcomes in the theoretical derivations and numerical experiments on the generalized Stiefel manifold.

The paper is organized as follows. Section \ref{section:2} briefly reviews the Riemannian conjugate gradient method on a Riemannian manifold $\operatorname{St}_M(n,p)$ and presents some useful existing concepts. In Section \ref{section:3}, we propose two vector transports that are related to the retraction constructed by the Cayley transform. In Section \ref{section:4}, we utilize these new vector transports to extend the modified Polak-\revise{Ribi$\grave{e}$re}-Polyak (PRP) conjugate gradient method with non-monotone condition to the generalized Stiefel manifold. Then we provide the global convergence analysis of the proposed algorithm. Section \ref{section:5} presents numerical experiments, and the paper concludes with Section \ref{section:6}.

\subsection{Notations}
\label{section:2.1}
We use $T_X \operatorname{St}_M(n,p): = \left\{Z \in \mathbb{R}^{n \times p}: X^{\top}M Z+Z^{\top}M X= {0_p}\right\}$ to denote the tangent space to the generalized Stiefel manifold $\operatorname{St}_M(n,p)$ at the point $X \in \operatorname{St}_M(n,p)$. When there is no confusion, we omit $n,p$ and use $\operatorname{St}_M$ and $T_X \operatorname{St}_M$ to represent $ \operatorname{St}_M(n,p)$ and $T_X \operatorname{St}_M(n,p)$, respectively.
 We denote $\operatorname{tr}(X)$ as the trace of a matrix $X$, which is defined as the sum of all the entries in the diagonal of $X$.
Let $\left<A,B\right> = \operatorname{tr}\left(A^\top B\right)$ denote the Euclidean inner product of two matrices $A,B$ of the same dimensions and $\|A\|_F = \sqrt{\left<A,A\right>}$ denote the Frobenius norm of $A$.  
  For a given $n\times n$ matrix $A$, we define $\operatorname{skew}(A) = (A-A^{\top})/2$ as the skew-symmetric component of $A$, and  $\operatorname{sym}(A)=(A+A^{\top})/2$ as the symmetric component of $A$. Table \ref{tab:1} lists all notations that occurred in this paper. 

\revise{
\begin{table}[h] \footnotesize

\centering
\renewcommand{\tablename}{Table}
\caption{Summary of notations}
\begin{tabular}{|llll|}
\hline

$\operatorname{dist}$ & Geodesic distance on $\operatorname{St}_M$ & $M$ & \makecell[l]{Symmetric positive definite matrix}  \\

$f(X)$ & Mapping $f:\mathbb{R}^{n \times p} \rightarrow \mathbb{R}$ & $\kappa$ & constant\\  

$\mathrm{id}_{T_X \operatorname{St}_M}$ & Identity mapping on $T_X \operatorname{St}_M$ & $\pi\left(\mathcal{T}_\eta(\xi)\right)$ & foot of the tangent vector $\mathcal{T}_\eta(\xi)$\\

$I_p$ & $p$-by-$p$ identity matrix & $\Omega$ & Skew-symmetric matrix\\


$\mathcal{M}$ & Manifold & $\mathcal{R}_X$ & Retraction on $\operatorname{St}_M$ at $X$\\

$\operatorname{skew}(A)$ & \makecell[l]{Skew-symmetric component 
of $A$} & $\mathrm{D} \mathcal{R}_X\left(0_X\right)$  & Derivative of $\mathcal{R}_X$ at $0_X$\\

$\operatorname{St}$ & Stiefel manifold & $\mathrm{D} \bar{f}(X)$ & Fr$\acute{e}$chet differential of $\bar{f}$ at $X$\\

$\operatorname{St}_M$ & Generalized Stiefel manifold & $\left<A,B\right>$ & \makecell[l]{Euclidean inner product  $\operatorname{tr}\left(A^\top B\right)$}\\

$\operatorname{sym}(A)$ & Symmetric component of $A$ & $\|A\|_F$ & Frobenius norm of $\sqrt{\left<A,A\right>}$\\

$t^{\mathrm{BB}}$ & \makecell[l]{Riemannian Barzilai-Borwein \\
stepsize} & $\left\langle U, V\right\rangle_X$ & \makecell[l]{Non-standard Riemannian metric \\
$\operatorname{tr}\left(U^{\top}M V\right)$}\\

$T_X \operatorname{St}_M$ & Tangent space to $\operatorname{St}_M$ at $X$ & $\oplus$ & Whitney sum\\

$\mathcal{T}$ & Vector transport operator & $\otimes$ & Kronecker product \\

$\mathcal{T}_{\eta}(\xi)$ & \makecell[l]{Tangent vector at $\xi \in T_X \operatorname{St}_M$ \\ in direction $\eta$} &  $\theta$ & \makecell[l]{Angle between vectors transports \\ $\mathcal{T}_{tZ}(Z)$ and $\mathcal{T}_{tZ}^{\mathcal{R}}(Z)$}  \\

$\mathcal{T}_{\eta}^{\mathcal{R}}(\xi)$ & Vector transport with direction &  & \\

$W_Z$ & Skew-symmetric matrix & & \\


$\beta_i$ & Imaginary part of an eigenvalue &  & \\

$\gamma_i$ & Eigenvalue &  & \\

$\eta, \xi$ & Search directions in $T_X \operatorname{St}_M$ &  & \\
\hline
\end{tabular}\label{tab:1}
\end{table}
}

\section{Preliminaries}\label{section:2}

In this section, we present a comprehensive introduction to Riemannian optimization and introduce the Riemannian conjugate gradient method. 
\subsection{Riemannian manifold optimization}\label{section:2.2}

The process of minimizing a function $f(X)$ on a Riemannian manifold $\operatorname{St}_M$ is commonly referred to as Riemannian optimization. The general iteration scheme  for Riemannian optimization can be expressed as follows:
\revise{$$
X_{k+1} = \mathcal{R}_{X_k}(t_k \eta_k),
$$}
where $\eta_k \in T_{X_{k}} \operatorname{St}_M$ is the search direction, and $\mathcal{R}_X:T_X 
 \operatorname{St}_M\rightarrow \operatorname{St}_M$ represents a retraction on $\operatorname{St}_M$ at $X$. The concept of retraction is defined as follows.

\begin{Definition}\cite[Retraction]{1} \label{def2.1} A retraction on a manifold $\operatorname{St}_M$ is a smooth mapping $\mathcal{R}:T \operatorname{St}_M \rightarrow \operatorname{St}_M$ which has the following properties: let $\mathcal{R}_X$ be a restriction of $\mathcal{R}$ to $T_X \operatorname{St}_M$, then
\begin{itemize}
    \item $\mathcal{R}_X\left(0_X\right)=X$, where $0_X$ is the zero element of $T_X \operatorname{St}_M$.
    \item With the canonical identification $T_{0_X} T_X \operatorname{St}_M \simeq T_X \operatorname{St}_M, \mathcal{R}_X$ satisfies
$$
\mathrm{D} \mathcal{R}_X \left(0_X\right)=\mathrm{id}_{T_X \operatorname{St}_M},
$$
where $\mathrm{D} \mathcal{R}_X\left(0_X\right)$ denotes the derivative of $\mathcal{R}_X$ at $0_X$ and $\mathrm{id}_{T_X \operatorname{St}_M}$ denotes the identity mapping on $T_X \operatorname{St}_M$.
\end{itemize}
\end{Definition}

We endow the generalized Stiefel manifold $\operatorname{St}_M$ with a non-standard Riemannian metric, i.e. 
\begin{equation}\label{def:metric}
    \left\langle \xi, \eta\right\rangle_X = \operatorname{tr}\left(\xi^{\top}M \eta\right),
\end{equation} for any $\xi, \eta \in T_X \operatorname{St}_M, X \in \operatorname{St}_M$ and the induced norm is defined as $\left\| \eta \right\|_X = \sqrt{\left\langle \eta,\eta\right\rangle_X} = \sqrt{\operatorname{tr}\left(\eta^{\top}M\eta\right)}$. 

\begin{Definition}\cite[Riemannian Gradient]{1}\label{def2.2} The Riemannian gradient, denoted by $\operatorname{grad}f$ $(X) \in T_X \operatorname{St}_M$, is defined as the unique tangent vector that satisfies
$$
\left\langle \operatorname{grad}f(X),\xi \right\rangle_X = \operatorname{D}f(X)[\xi],  \quad  \forall \xi \in T_X \operatorname{St}_M.
$$
\end{Definition}
The structure of a conjugate gradient direction in Euclidean space can be expressed as follows:
$$
\eta_{k+1}=-\nabla f\left(X_{k+1}\right)+\beta_{k+1} \eta_k .
$$
However, this formula is not directly applicable to Riemannian manifolds due to the fact that vectors cannot be added across different tangent spaces. Therefore, we must establish a method for transferring a vector from one tangent space to another. 

\begin{Definition}\cite[Vector Transport]{1}\label{def2.3} A vector transport $\mathcal{T}$ on a manifold $\operatorname{St}_M$ is a smooth mapping
$$
T \operatorname{St}_M \oplus T \operatorname{St}_M \rightarrow T \operatorname{St}_M: \quad(\eta, \xi) \mapsto \mathcal{T}_\eta(\xi) \in T \operatorname{St}_M,
$$
where $\oplus$ is the Whitney sum, 
$T \operatorname{St}_M \oplus T \operatorname{St}_M=\left\{(\eta, \xi) \mid \eta, \xi \in T_X \operatorname{St}_M, X \in \operatorname{St}_M\right\}$, $\forall X \in \operatorname{St}_M, $
satisfying the following properties: 
\begin{itemize}
\item[1)]  There exists a retraction $\mathcal{R}$, called the retraction connected with $\mathcal{T}$, such that
$$
\pi\left(\mathcal{T}_\eta(\xi)\right)=\mathcal{R}_X(\eta), \quad \eta, \xi \in T_X \operatorname{St}_M,
$$
where $\pi\left(\mathcal{T}_\eta(\xi)\right )$ denotes the foot of the tangent vector $\mathcal{T}_\eta(\xi)$.

\item[2)] $\mathcal{T}_{0_X}(\xi)=\xi$ for all $\xi \in T_X \operatorname{St}_M$.

\item[3)] $\mathcal{T}_\eta(a \xi+b \zeta)=a \mathcal{T}_\eta(\xi)+b \mathcal{T}_\eta(\zeta)$ for all $a, b \in \mathbb{R}$, and for all $\eta, \xi, \zeta \in T_X \operatorname{St}_M$.
\end{itemize}
\end{Definition}

\begin{figure*}[htbp]
  \centering
  \includegraphics[scale=0.25]{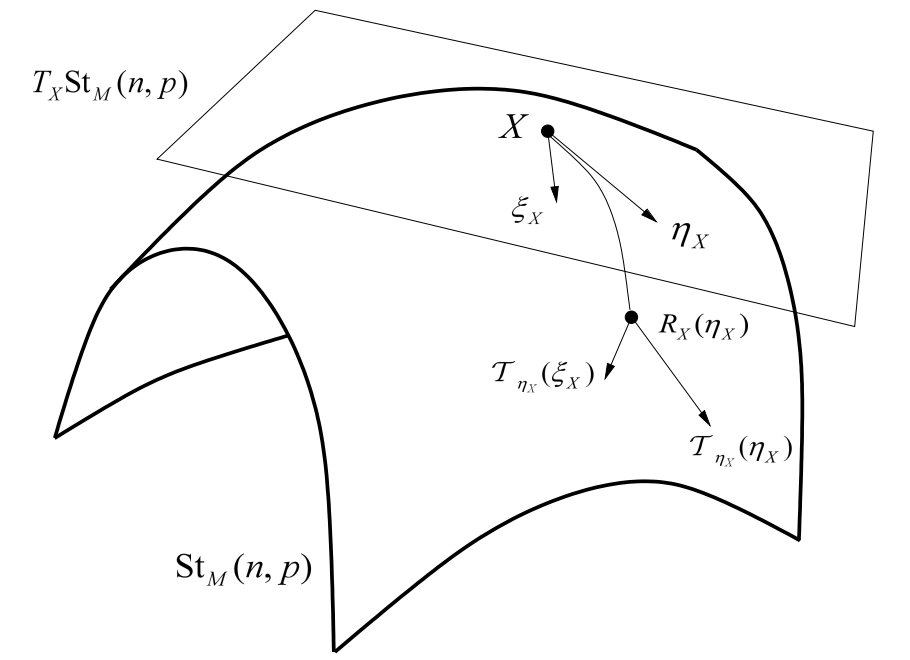}
  \renewcommand{\figurename}{\revise{Fig.}}
  \caption{Vector transport \cite{1}}
  \label{fig:3-1}
\end{figure*}

We give \revise{Fig. \ref{fig:3-1}} to illustrate the definition \ref{def2.3} in the following. The purpose of the vector transport operator on a manifold is to move tangent vectors from different tangent spaces to a common tangent space, and then obtain the desired conjugate gradient direction following the approach used in Euclidean space. 
\begin{Definition}\cite[Isometric Vector Transport]{16}\label{def2.4} A vector transport $\mathcal{T}$ is called isometric if it satisfies
$$
\left\langle\mathcal{T}_\eta(\xi), \mathcal{T}_\eta(\xi)\right\rangle_{\mathcal{R}_X(\eta)}=\langle\xi, \xi\rangle_X
$$
for all $\eta, \xi \in T_X \operatorname{St}_M$, where $\mathcal{R}$ is the retraction connected with $\mathcal{T}$.
\end{Definition}

\subsection{Riemannian conjugated gradient method}
Given a vector transport, a Riemannian conjugate gradient direction can be defined as follows
\revise{$$
\eta_{k+1}=-\operatorname{grad} f\left(X_{k+1}\right)+\beta_{k+1} \mathcal{T}_{t_k \eta_k}\left(\eta_k\right) .
$$}
A generally used vector transport is the differentiated retraction
$$
\mathcal{T}_\eta^{\mathcal{R}}(\xi)=\mathrm{D} \mathcal{R}_X(\eta)[\xi]=\left.\frac{\mathrm{d}}{\mathrm{d} t} \mathcal{R}_X(\eta+t \xi)\right|_{t=0},
$$
and as a result, we have
\begin{equation} \label{tranR}
\mathcal{T}_{t_k \eta_k}^{\mathcal{R}}\left(\eta_k\right)=\mathrm{D} \mathcal{R}_{X_k}\left(t_k \eta_k\right)\left[\eta_k\right]= \left.\frac{\mathrm{d}}{\mathrm{d} t} \mathcal{R}_{X_k}\left(t_k\eta_k\right) \right|_{t=t_k} .
\end{equation}
It is well known that conventional conjugate gradient methods like the Fletcher-Reeves method use the strong Wolfe conditions \cite{27}, the Riemannian version of which is
\revise{$$
f\left(\mathcal{R}_{X_k}\left(t_k \eta_k\right)\right) \leq f\left(X_k\right)+c_1 t_k\left\langle\operatorname{grad} f\left(X_k\right), \eta_k\right\rangle_{X_k}, 
$$}
\begin{equation}\label{Wolfe2}
\left|\left\langle\operatorname{grad} f\left(\mathcal{R}_{X_k}\left(t_k \eta_k\right)\right), \mathrm{D} \mathcal{R}_{X_k}\left(t_k \eta_k\right)\left[\eta_k\right]\right\rangle_{\mathcal{R}_{X_k}\left(t_k \eta_k\right)}\right| \leq c_2\left|\left\langle\operatorname{grad} f\left(X_k\right), \eta_k\right\rangle_{X_k}\right|,
\end{equation}
where $0<c_1<c_2<1$.  The Dai-Yuan method \cite{11}, another different type of conjugate gradient method, only needs the weak Wolfe conditions, where \eqref{Wolfe2} is substituted by
\revise{$$
\left\langle\operatorname{grad} f\left(\mathcal{R}_{X_k}\left(t_k \eta_k\right)\right), \mathrm{D} \mathcal{R}_{X_k}\left(t_k \eta_k\right)\left[\eta_k\right]\right\rangle_{\mathcal{R}_{X_k}\left(t_k \eta_k\right)} \geq c_2\left\langle\operatorname{grad} f\left(X_k\right), \eta_k\right\rangle_{X_k} .
$$}
Regardless of whether weak or strong Wolfe conditions are applied, the Ring-Wirth non-expansive condition \cite{27}
\begin{equation}\label{RWne}
\left\langle\mathcal{T}_{t_k \eta_k}\left(\eta_k\right), \mathcal{T}_{t_k \eta_k}\left(\eta_k\right)\right\rangle_{\mathcal{R}_{X_k}\left(t_k \eta_k\right)} \leq\left\langle\eta_k, \eta_k\right\rangle_{X_k}
\end{equation}
must be imposed on vector transports in order to achieve global convergence for Riemannian conjugate gradient techniques.  In the next section, we will introduce two new vector transport by the Cayley transform for the generalized Stiefel manifold, and show that they satisfy the Ring-Wirth non-expansive condition. 

\section{Vector transport for the generalized Stiefel manifold} \label{section:3}

In this section, we give the Riemannian gradient under the non-standard metric, and a retraction via the Cayley transform. Based on this, we obtain two vector transports.

\subsection{Retraction via the Cayley transform} \label{3.2}

Let us denote
\begin{equation}\label{repWz}
W_{Z}=\operatorname{P}_{X} Z X^{\top}-X Z^{\top} \operatorname{P}_{X}^{\top} \quad \text { and } \quad \operatorname{P}_{X}=I-\frac{1}{2} X X^{\top}M.
\end{equation}
For any point $X\in \operatorname{St}_M$ and the tangent vector $Z \in T_{X} \operatorname{St}_M$, it holds
\begin{equation}\label{repZ}
Z=W_{Z} M X.
\end{equation}
The Cayley transform \cite{34} on the generalized Stiefel manifold is defined as follows
\begin{equation}\label{retrX}
\mathcal{R}_{X}(t Z)=X(t)=\left(I-\frac{t}{2} W_{Z}M\right)^{-1}\left(I+\frac{t}{2} W_{Z}M\right) X.
\end{equation}
  It can be shown that the curve $\Gamma(t)=\mathcal{R}_{X}(t Z)$ is included in $\operatorname{St}_M$ and satisfies $\Gamma(0)=X$ and $\Gamma^{\prime}(0)=W_{Z} M X=Z$.
If $Z = \operatorname{grad} f(X)$, it follows from \eqref{gradf} and \eqref{repWz} that
\revise{$$
W_Z = \operatorname{P}_X \operatorname{grad} f(X) X^{\top} - X \operatorname{grad} f(X)^{\top} \operatorname{P}_X^{\top} = \operatorname{P}_X \tilde{G} X^{\top} - X \tilde{G}^{\top} \operatorname{P}_X^{\top}.
$$}

It is shown that \eqref{retrX} necessitates the computation of the inverse of the matrix $\left(I-\frac{t}{2}W_{Z}M\right)^{-1}$. Notably, Wen and Yin \cite{34} have introduced an efficient method for computing \eqref{retrX} when dealing with the Stiefel manifold under the condition where $p \ll n$. We extend this method to the generalized Stiefel manifold.

\begin{lemma}
    When $p\ll n$, the retraction operator \eqref{retrX} can be computed by the following form:
    \begin{equation}\label{scheme1}
\mathcal{R}_{X}(t Z) = X + t U \left(I-\frac{t}{2} V^{\top} M U\right)^{-1} V^{\top} M X.
\end{equation}
where $U=\left[\operatorname{P}_{X} Z, X\right]$ and $V=\left[X,-\operatorname{P}_{X} Z\right]$.
\end{lemma}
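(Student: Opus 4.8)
The plan is to exploit the fact that, although $W_ZM$ is an $n\times n$ matrix, it has rank at most $2p$, so the Cayley transform \eqref{retrX} really only needs to invert a genuinely $2p\times 2p$ object; the tool for this is the Sherman--Morrison--Woodbury formula. First I would read off a rank-revealing factorization of $W_Z$: with $U=[\operatorname{P}_XZ,\,X]$ and $V=[X,\,-\operatorname{P}_XZ]$ (both $n\times 2p$), block multiplication gives
\begin{equation*}
UV^{\top}=\operatorname{P}_XZ\,X^{\top}-X\,Z^{\top}\operatorname{P}_X^{\top}=W_Z
\end{equation*}
by the definition \eqref{repWz}, hence $W_ZM=U\,(V^{\top}M)$, which is the key low-rank representation.

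Next I would rewrite the retraction so that only one inverse appears. Using the elementary identity $I+A=2I-(I-A)$ with $A=\tfrac{t}{2}W_ZM$,
\begin{equation*}
\mathcal{R}_X(tZ)=\left(I-\tfrac{t}{2}W_ZM\right)^{-1}\left(I+\tfrac{t}{2}W_ZM\right)X=\left[2\left(I-\tfrac{t}{2}W_ZM\right)^{-1}-I\right]X .
\end{equation*}
Now everything reduces to computing $\left(I-\tfrac{t}{2}W_ZM\right)^{-1}=\left(I-\tfrac{t}{2}UV^{\top}M\right)^{-1}$. Applying Sherman--Morrison--Woodbury in the form $(I-AB)^{-1}=I+A(I-BA)^{-1}B$ with $A=\tfrac{t}{2}U$ and $B=V^{\top}M$ gives
\begin{equation*}
\left(I-\tfrac{t}{2}UV^{\top}M\right)^{-1}=I+\tfrac{t}{2}U\left(I-\tfrac{t}{2}V^{\top}MU\right)^{-1}V^{\top}M .
\end{equation*}
Substituting this into the displayed expression for $\mathcal{R}_X(tZ)$, the $-I$ cancels against one of the two copies of $I$ coming from the leading factor $2$, and what survives is precisely $X+tU\left(I-\tfrac{t}{2}V^{\top}MU\right)^{-1}V^{\top}MX$, i.e.\ \eqref{scheme1}.

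The computation is essentially mechanical, so I do not expect a serious obstacle; the two points that need a line of care are (i) checking the Woodbury identity $(I-AB)^{-1}=I+A(I-BA)^{-1}B$, which follows at once by multiplying through by $I-AB$, and (ii) making sure the small matrix $I-\tfrac{t}{2}V^{\top}MU$ is nonsingular so that \eqref{scheme1} is well posed. For (ii) I would note that $V^{\top}MU$ and $UV^{\top}M=W_ZM$ share the same nonzero eigenvalues, and $W_ZM=M^{-1/2}\bigl(M^{1/2}W_ZM^{1/2}\bigr)M^{1/2}$ is similar to a skew-symmetric matrix (since $W_Z^{\top}=-W_Z$), hence has purely imaginary spectrum; therefore $I-\tfrac{t}{2}V^{\top}MU$ is invertible for every $t\in\mathbb{R}$, exactly as $I-\tfrac{t}{2}W_ZM$ is. That invertibility remark is the only genuinely non-mechanical part of the argument.
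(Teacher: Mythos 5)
Your proposal is correct and follows essentially the same route as the paper: both rest on the low-rank factorization $W_Z=UV^{\top}$ and the Sherman--Morrison--Woodbury formula applied to $\bigl(I-\tfrac{t}{2}UV^{\top}M\bigr)^{-1}$, with only a cosmetic difference in the algebra (you rewrite the Cayley factor as $2\bigl(I-\tfrac{t}{2}W_ZM\bigr)^{-1}-I$ before substituting, whereas the paper expands the product of the two factors and collapses the resulting expression). Your added remark on the invertibility of $I-\tfrac{t}{2}V^{\top}MU$ for all $t$ is a sound refinement that the paper leaves implicit.
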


\begin{proof}
Since $W_{Z}$ in \eqref{repWz} has the form $W_{Z}=U V^{\top}$.  It follows from the Sherman-Morrison-Woodbury (SMW) formula \cite{deng2011generalization} that
$$
\left(I-\frac{t}{2} W_{Z} M\right)^{-1}=\left(I-\frac{t}{2} U V^{\top} M \right)^{-1}=I+\frac{t}{2} U\left(I-\frac{t}{2} V^{\top} M U\right)^{-1} V^{\top}M.
$$
With $I+\frac{t}{2} W_{Z} M =I+\frac{t}{2} U V^{\top} M$ and \eqref{retrX}, we have
$$
\begin{aligned}
\mathcal{R}_{X}(t Z) &= \left(I-\frac{t}{2} W_{Z}M\right)^{-1}\left(I+\frac{t}{2} W_{Z}M\right) X \\
&= \left(I+\frac{t}{2}U \left(I-\frac{t}{2}{V}^{\top} M U \right)^{-1} V^{\top} M \right) \left( I+\frac{t}{2}U V^{\top} M \right) X \\
&=X + \frac{t}{2} U \left(\left(I-\frac{t}{2} V^{\top} M U \right)^{-1}\left(I+\frac{t}{2} V^{\top} M U \right)+I\right) V^{\top} M X \\
&=X + t U \left(I-\frac{t}{2} V^{\top} M U\right)^{-1} V^{\top} M X.
\end{aligned}
$$
Thus, we can get the following updated scheme \eqref{scheme1}.
   
\end{proof}

Note that in fact, \cite{32} gave a low-rank factorization for $W_Z M$, i.e. $W_Z M = UVM$ and suggested reformulating the Cayley transform by the Sherman-Morrison-Woodbury formula, but they have not presented an explicit formula like \eqref{scheme1}.

Building upon the inspiration from \cite{39}, we present the derivation of two new vector transport formulas in Sections \ref{section:3.3} to \ref{section:3.5}. It’s important to note that in the derivation process, we utilize a non-standard metric $\left\langle \xi, \eta\right\rangle_X = \operatorname{tr}(\xi^{\top}M \eta)$, where $\xi, \eta \in T_X \operatorname{St}_M$, and $X \in \operatorname{St}_M$. Following the work of \cite{39}, we construct and analyze these two vector transports and explore their relationship.

\subsection{Vector transport as differentiated retraction}\label{section:3.3}

In the preceding section, we acquired an understanding of the Cayley transform retraction formula applied to the generalized Stiefel manifold. We shall now proceed to derive computational formulae for the vector transport through the differentiation of the aforementioned retraction, denoted as \eqref{retrX}. It follows from \eqref{retrX} that
\begin{equation}\label{eq:retx2}
\left(I-\frac{1}{2} W_{Z} M-\frac{t}{2} W_{Y} M\right) \mathcal{R}_{X}(Z+t Y)=\left(I+\frac{1}{2} W_{Z} M+\frac{t}{2} W_{Y} M\right) X.
\end{equation}
By differentiating both sides of 
\eqref{eq:retx2} with respect to $t$, we obtain
$$
-\frac{1}{2} W_{Y} M \mathcal{R}_{X}(Z+t Y)+\left(I-\frac{1}{2} W_{Z} M-\frac{t}{2} W_{Y} M\right) \frac{\mathrm{d}}{\mathrm{d} t} \mathcal{R}_{X}(Z+t Y)=\frac{1}{2} W_{Y} M X .
$$
Then we define the vector transport with the retraction $\mathcal{R}$ as
\begin{align}\label{TranYZ}
\mathcal{T}_{Z}^{\mathcal{R}}(Y) &=\left.\frac{\mathrm{d}}{\mathrm{d} t} \mathcal{R}_{X}(Z+t Y)\right|_{t=0}=\frac{1}{2}\left(I-\frac{1}{2} W_{Z} M\right)^{-1} W_{Y} M \left(X+\mathcal{R}_{X}(Z)\right) \notag \\
&=\frac{1}{2}\left(I-\frac{1}{2} W_{Z} M\right)^{-1} W_{Y} M\left(X+\left(I-\frac{1}{2} W_{Z} M\right)^{-1}\left(I+\frac{1}{2} W_{Z} M\right) X\right) \notag \\
&=\left(I-\frac{1}{2} W_{Z} M\right)^{-1} W_{Y} M\left(I-\frac{1}{2} W_{Z} M\right)^{-1} X.
\end{align}

The following lemma \ref{lem3.2} shows that \eqref{TranYZ} satisfies the Ring-Wirth nonexpansive property \eqref{RWne}.

\begin{lemma}\label{lem3.2} \revise{\cite[Lemma 2]{39}}\quad The Ring-Wirth non-expansive condition \eqref{RWne} is satisfied by the vector transport formula given by \eqref{TranYZ} with respect to the Riemannian metric on $\operatorname{St}_M$.
\end{lemma}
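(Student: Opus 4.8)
The plan is to verify the Ring–Wirth non-expansive condition \eqref{RWne} for the vector transport $\mathcal{T}^{\mathcal{R}}$ by directly computing the norm of $\mathcal{T}^{\mathcal{R}}_{tZ}(Z)$ under the non-standard metric and comparing it to $\|Z\|_X = \sqrt{\operatorname{tr}(Z^\top M Z)}$. Setting $Y = Z$ in \eqref{TranYZ} and writing $A = (I - \tfrac{t}{2}W_Z M)^{-1}$, we have $\mathcal{T}^{\mathcal{R}}_{tZ}(Z) = A\,(tW_Z M)\,A X / t$ — more precisely, with the scaling by $t$ in the retraction argument, $\mathcal{T}^{\mathcal{R}}_{tZ}(Z) = A W_Z M A X$ (up to the bookkeeping of the factor $t$ exactly as in \eqref{tranR}). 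The foot of this tangent vector is $\mathcal{R}_X(tZ) = A(I + \tfrac{t}{2}W_Z M)X$, so the left-hand side of \eqref{RWne} is $\operatorname{tr}\!\big( (A W_Z M A X)^\top M (A W_Z M A X)\big)$ evaluated at the point $\mathcal{R}_X(tZ)$.

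The key algebraic fact to exploit is the skew-adjointness of $W_Z M$ with respect to the $M$-inner product: since $W_Z$ is skew-symmetric (from \eqref{repWz}), we have $(W_Z M)^\top M = M W_Z^\top M = -M W_Z M$, i.e. $W_Z M$ is skew-symmetric in the metric $\langle\cdot,\cdot\rangle$ induced by $M$ on $\R^n$. Consequently the Cayley factor $C := A(I + \tfrac{t}{2}W_Z M) = (I - \tfrac{t}{2}W_Z M)^{-1}(I + \tfrac{t}{2}W_Z M)$ is an $M$-orthogonal matrix: $C^\top M C = M$. This is exactly what makes $\mathcal{R}_X(tZ)$ land on $\operatorname{St}_M$, and it is the engine of the proof. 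First I would establish $C^\top M C = M$ cleanly. Then, rewriting $\mathcal{T}^{\mathcal{R}}_{tZ}(Z)$ in terms of $C$ and $A$ and using $A = C(I + \tfrac{t}{2}W_Z M)^{-1}$ together with commutativity of $A$, $C$, and $W_Z M$ (they are all rational functions of the single matrix $W_Z M$), I would reduce the norm computation to $\operatorname{tr}\!\big( X^\top M\, (W_Z M)^\top (I + \tfrac{t^2}{4}(W_Z M)^\top(W_Z M))^{-1} (W_Z M)\, M X\big)$ after the $M$-orthogonal factor $C$ cancels. Using skew-adjointness once more, $(W_Z M)^\top M = -M W_Z M$, and the identity $Z = W_Z M X$ from \eqref{repZ}, this collapses to $\operatorname{tr}\!\big(Z^\top M (I + \tfrac{t^2}{4}(W_Z M)^\top (W_Z M))^{-1} Z\big)$ (with the operator inserted $M$-symmetrically). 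Since $(W_Z M)^\top(W_Z M)$ is $M$-positive-semidefinite, the inserted operator is $M$-self-adjoint with eigenvalues in $(0,1]$, so the trace is bounded above by $\operatorname{tr}(Z^\top M Z) = \|Z\|_X^2$, which is precisely \eqref{RWne}.

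The main obstacle I anticipate is the bookkeeping around the factor $t$ and making the cancellation of the $M$-orthogonal Cayley factor $C$ rigorous while respecting that the norm on the left of \eqref{RWne} is taken at the point $\mathcal{R}_X(tZ)$, not at $X$ — one must be careful that the metric at the new footpoint is still $\operatorname{tr}(\cdot^\top M\,\cdot)$ with the \emph{same} $M$, which is true because $M$ is a fixed ambient matrix, not a point-dependent object; this is where the non-standard metric actually simplifies matters compared with the standard one. The secondary technical point is justifying that all the matrices involved commute (so that the $(I - \tfrac{t}{2}W_Z M)^{-1}$ factors can be regrouped and the expression written as a single rational function of $W_Z M$ sandwiched by $C$), which follows because each is a polynomial or inverse-polynomial in the one matrix $W_Z M$. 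Once these points are handled, the spectral bound on $(I + \tfrac{t^2}{4}(W_Z M)^\top(W_Z M))^{-1}$ finishes the argument; this mirrors \cite[Lemma 2]{39} but with every orthogonality and symmetry statement reinterpreted in the $M$-inner product.
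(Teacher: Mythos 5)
Your proposal is correct and follows essentially the same route as the paper's proof: both express $\mathcal{T}^{\mathcal{R}}_{tZ}(Z)=(I-\tfrac{t}{2}W_ZM)^{-2}Z$ via \eqref{repZ}, reduce the left-hand side of \eqref{RWne} to a quadratic form $\operatorname{tr}\bigl(Z^{\top}M(\cdot)Z\bigr)$ whose middle operator is an $M$-self-adjoint contraction, and conclude by the spectral fact that skew-symmetry of $W_Z$ and positive definiteness of $M$ force the eigenvalues of $I-\tfrac{t^{2}}{4}W_Z^{2}M^{2}$ (equivalently, of the $M$-self-adjoint operator $I-\tfrac{t^{2}}{4}(W_ZM)^{2}$) to be at least $1$; your explicit use of the $M$-orthogonality $C^{\top}MC=M$ of the Cayley factor is just a more structural packaging of the cancellations the paper performs directly. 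The one algebraic quibble is that carrying out your reduction gives the middle operator $(I-\tfrac{t^{2}}{4}(W_ZM)^{2})^{-2}$ rather than $(I+\tfrac{t^{2}}{4}(W_ZM)^{\top}(W_ZM))^{-1}$, but this does not affect the argument, since that operator is likewise $M$-self-adjoint with spectrum in $(0,1]$, which is all the final bound requires.
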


\begin{proof}
 Since $W_{Z}$ is skew-symmetric, it is easy to know that its eigenvalues are zeros or pure imaginary numbers. Besides, $M^2$ is positive definite and $-W_Z^2$ is positive definite or positive semi-definite, so $-W_Z^2 M^2$ has positive or non-negative eigenvalues \revise{\cite[Corollary 2]{horn2012matrix}}. Therefore $I-\frac{t^2}{4}{W_{Z}^2} M^2$ is a matrix with all eigenvalues not less than 1. It follows from \eqref{repZ} that
 \begin{equation}\label{TranZk}
\mathcal{T}_{t Z}^{\mathcal{R}}\left(Z\right)=\left(I-\frac{t}{2} W_Z M\right)^{-2} W_{Z} M X = \left(I-\frac{t}{2} W_Z M\right)^{-2} Z.
\end{equation}
 Then we get from \eqref{TranZk} that
$$
\begin{aligned}
\left\langle\mathcal{T}_{t Z}^{\mathcal{R}}\left(Z\right), \mathcal{T}_{t Z}^{\mathcal{R}}\left(Z\right)\right\rangle_{\mathcal{R}_{X}\left(t Z\right)} &=\operatorname{tr}\left(Z^{\top}\left(I+\frac{t}{2} W_{Z} M\right)^{-2} M \left(I-\frac{t}{2} W_{Z} M \right)^{-2} Z\right) \\
&=\operatorname{tr}\left(Z^{\top}\left(I-\frac{t^{2}}{4} W_{Z}^{2} M^2\right)^{-2} M Z\right) \\
& \leq\left\langle Z, Z\right\rangle_{X},
\end{aligned}
$$
which completes the proof. 
\end{proof}

In general, the definition of vector transports entails that the mapping $\mathcal{T}_{t \eta}: T_{X} \operatorname{St}_M \rightarrow T_{\mathcal{R}_{X}(t \eta)} \operatorname{St}_M$ functions as an isomorphism and thus preserves linear independence for all suitably small $t$. This means that, for all sufficiently small $t$, if \revise{$\xi_{1}, \dots, \xi_{d}$} form a basis of $T_{X} \operatorname{St}_M$, where $d$ is the dimension of $\operatorname{St}_M$, then \revise{$\mathcal{T}_{\mathcal{R}_{X}(t \eta)}\left(\xi_{1}\right), \dots, \mathcal{T}_{\mathcal{R}_{X}(t \eta)}\left(\xi_{d}\right)$} form a basis of $T_{\mathcal{R}_{X}(t \eta)} \operatorname{St}_M$. In order to see more clearly the local isomorphic property of vector transport \eqref{TranYZ} in the special case, we perform the following calculations \cite{39}.
\begin{lemma}
    The isomorphism of $\mathcal{T}_{t Z}^{\mathcal{R}}: T_{X} \mathrm{St}_M \rightarrow T_{\mathcal{R}_{X}(t Z)} \mathrm{St}_M: Y \mapsto \left(I-\frac{t}{2} W_{Z}\right.$ $ \left.M \right)^{-1}$ $W_{Y} M \left(I -\frac{t}{2} W_{Z} M \right)^{-1} X$ implies this linear mapping has full rank. 
\end{lemma}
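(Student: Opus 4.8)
The plan is to prove that the linear map $\mathcal{T}_{tZ}^{\mathcal{R}}$ is injective; since its domain $T_X\operatorname{St}_M$ and codomain $T_{\mathcal{R}_X(tZ)}\operatorname{St}_M$ both have dimension $d=\dim\operatorname{St}_M$, injectivity is equivalent to bijectivity, i.e.\ to the map having full rank. First I would observe that $I-\tfrac{t}{2}W_ZM$ is invertible for every $t$: conjugating by $M^{1/2}$ gives $M^{1/2}(W_ZM)M^{-1/2}=M^{1/2}W_ZM^{1/2}$, a real skew-symmetric matrix, so $W_ZM$ has purely imaginary spectrum and $I-\tfrac{t}{2}W_ZM$ has all eigenvalues with real part $1$. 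Cancelling the two invertible factors $(I-\tfrac{t}{2}W_ZM)^{-1}$ in \eqref{TranYZ}, the equation $\mathcal{T}_{tZ}^{\mathcal{R}}(Y)=0$ becomes
\[
W_Y M Q = 0, \qquad Q:=\Bigl(I-\tfrac{t}{2}W_ZM\Bigr)^{-1}X .
\]

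The second step is the key auxiliary fact: the $p\times p$ matrix $X^\top M Q$ is nonsingular. I would establish this by showing its symmetric part is positive definite. Using $(I-\tfrac{t}{2}W_ZM)^{-\top}M=M(I+\tfrac{t}{2}W_ZM)^{-1}$ together with the identity $(I-A)^{-1}+(I+A)^{-1}=2(I-A^2)^{-1}$ for $A=\tfrac{t}{2}W_ZM$, and then conjugating by $M^{1/2}$ once more,
\[
X^\top M Q + (X^\top M Q)^\top = 2\,\widehat{X}^{\top}\Bigl(I-\tfrac{t^2}{4}\widehat{W}_Z^{2}\Bigr)^{-1}\widehat{X},
\]
where $\widehat{X}=M^{1/2}X$ has full column rank (since $X^\top M X=I_p$) and $\widehat{W}_Z=M^{1/2}W_ZM^{1/2}$ is skew-symmetric, so $-\widehat{W}_Z^{2}$ is positive semidefinite and $(I-\tfrac{t^2}{4}\widehat{W}_Z^{2})^{-1}$ is positive definite. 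Hence the right-hand side is positive definite, which forces $X^\top M Q$ to be invertible (a real square matrix whose symmetric part is positive definite cannot annihilate any nonzero vector).

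The third step converts $W_YMQ=0$ into $Y=0$ by exploiting the low-rank structure of $W_Y$. Writing $R:=\operatorname{P}_XY$, we have $W_Y=RX^\top-XR^\top$ by \eqref{repWz}, and $X^\top M R=\tfrac12 X^\top M Y=:\tfrac12 S$ with $S$ skew-symmetric because $Y\in T_X\operatorname{St}_M$. Left-multiplying $W_YMQ=0$ by $X^\top M$ yields $\tfrac12 S(X^\top M Q)-R^\top M Q=0$, i.e.\ $R^\top M Q=\tfrac12 S(X^\top M Q)$; substituting this back into $W_YMQ=RX^\top MQ-XR^\top MQ=0$ gives $(R-\tfrac12 XS)(X^\top M Q)=0$, so $R=\tfrac12 XS$ by nonsingularity of $X^\top M Q$. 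Then $W_Y=RX^\top-XR^\top=XSX^\top$, hence $Y=W_YMX=XS$ by \eqref{repZ}; feeding this into $W_YMQ=0$ and again multiplying on the left by $X^\top M$ gives $S(X^\top M Q)=0$, so $S=0$ and $Y=0$. This proves $\mathcal{T}_{tZ}^{\mathcal{R}}$ is injective, hence an isomorphism of full rank.

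The step I expect to be the main obstacle is the second one. Cancelling the invertible Cayley factors only reduces the problem to $W_YMQ=0$, and because $Q$ has merely $p$ columns while the skew-symmetric matrix $W_Y$ can have rank up to $2p$, this alone does not force $W_Y=0$; one genuinely has to combine the special form $W_Y=\operatorname{P}_XYX^\top-XY^\top\operatorname{P}_X^\top$ with the nonsingularity of $X^\top M Q$. Proving that nonsingularity — equivalently, that $-1$ is never an eigenvalue of $X^\top M \mathcal{R}_X(tZ)$, which I prefer to verify through the ``symmetric part is positive definite'' computation above rather than by tracking eigenvalues of the Cayley matrix — is the technical heart of the argument; the remaining manipulations are routine linear algebra over the non-standard metric.
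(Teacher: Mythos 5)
Your proof is correct, but it takes a genuinely different route from the paper's. The paper vectorizes the map via Kronecker products, writes it as $B_1\,L(t)\,[M\otimes I]\,B_2$ acting on $(\operatorname{vec}(\Omega),\operatorname{vec}(K))$, verifies full rank only at $t=0$ (where the matrix reduces to $[\,I\otimes X,\ I\otimes X_\perp\,]$), and then appeals to continuity for a safe interval around $t=0$, explicitly cautioning that $B_1L(t)B_2$ might lose rank for other $t$. You instead analyze the kernel directly: after cancelling the invertible Cayley factor, injectivity reduces to showing $W_YMQ=0\Rightarrow Y=0$ with $Q=(I-\tfrac{t}{2}W_ZM)^{-1}X$, and the technical heart is your observation that $X^\top MQ$ has positive definite symmetric part, $X^\top MQ+(X^\top MQ)^\top=2\widehat{X}^\top\bigl(I-\tfrac{t^2}{4}\widehat{W}_Z^2\bigr)^{-1}\widehat{X}\succ0$ with $\widehat{X}=M^{1/2}X$, $\widehat{W}_Z=M^{1/2}W_ZM^{1/2}$, hence is invertible; the structure $W_Y=\operatorname{P}_XYX^\top-XY^\top\operatorname{P}_X^\top$ from \eqref{repWz} together with \eqref{repZ} then forces $Y=0$ (your manipulations through $R=\operatorname{P}_XY$ and $S=X^\top MY$ all check out). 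What each approach buys: the paper's vectorized form is an explicit matrix representation that makes the $t=0$ case transparent, but it only certifies isomorphism locally in $t$ and leaves open the possibility of rank deficiency; your argument is stronger, since it shows $\mathcal{T}_{tZ}^{\mathcal{R}}$ in \eqref{TranYZ} is injective, hence an isomorphism between the equal-dimensional tangent spaces, for every $t$, which renders the paper's "safe interval" caveat unnecessary for this particular vector transport.
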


\begin{proof}
First, by plugging \eqref{tangent2} in \eqref{repWz} and taking the vectorization operator vec($\cdot$), we have
$$
\begin{aligned}
\operatorname{vec}(W_Z) &= \operatorname{vec}(X \Omega X^{\top} + X_{\perp} K X^{\top} - X K^{\top} X_{\perp}^{\top})\\
& = \left[X \otimes X, X \otimes X_{\perp}-\left(X_{\perp} \otimes X\right) \Pi\right] \cdot\left[\begin{array}{c}
\operatorname{vec}(\Omega) \\
\operatorname{vec}(K)
\end{array}\right]
\end{aligned}
$$
then again for $\mathcal{T}_{t Z}^{\mathcal{R}}(Y)$ taking the vectorization operator vec($\cdot$), we obtain
$$
\begin{aligned}
\operatorname{vec}\left(\mathcal{T}_{t Z}^{\mathcal{R}}(Y)\right) &=\left[X^{\top} \otimes I\right] \cdot\left[\left(I+\frac{t}{2} W_{Z} M \right)^{-1} \otimes\left(I-\frac{t}{2} W_{Z} M\right)^{-1}\right] \cdot \left[M \otimes I\right]\\
&\cdot\left[X \otimes X, X \otimes X_{\perp}-\left(X_{\perp} \otimes X\right) \Pi\right] \cdot\left[\begin{array}{c}
\operatorname{vec}(\Omega) \\
\operatorname{vec}(K)
\end{array}\right] \\
&=B_{1} \cdot L(t) \cdot \left[M \otimes I\right] \cdot B_{2} \cdot\left[\begin{array}{c}
\operatorname{vec}(\Omega) \\
\operatorname{vec}(K)
\end{array}\right],
\end{aligned}
$$
where $\otimes$ is the Kronecker product, $B_1 = \left[X^{\top} \otimes I\right], B_2 = \left[X \otimes X, X \otimes X_{\perp}-\right.$ $\left.\left(X_{\perp} \otimes X\right) \Pi\right]$, $\Pi$ is the symmetric permutation matrix that satisfies $\operatorname{vec}\left(A^{\top}\right)=\Pi \operatorname{vec}(A)$ \cite{van2000ubiquitous}. Clearly, the full rankness of $B_{1} L(t) B_{2}$ implies the isomorphism of $\mathcal{T}_{t Z}^{\mathcal{R}}$.  According to the definition $T_{0}^{\mathcal{R}}(Y)=Y$ in vector transports or by plugging $t=0$ directly into $L(t)$ , we can see that 
$$
\begin{aligned}
&B_1 \cdot L(t) \cdot \left[M \otimes I\right] \cdot B_{2} \\
& = \left[X^{\top} \otimes I\right] \cdot \left[M \otimes I\right] \cdot \left[X \otimes X, X \otimes X_{\perp}-\left(X_{\perp} \otimes X\right) \Pi\right]\\
& = \left[X^{\top}M \otimes I \right] \cdot \left[X \otimes X, X \otimes X_{\perp}-\left(X_{\perp} \otimes X\right) \Pi\right]\\
& = \left[X^{\top} M \cdot X \otimes I \cdot X, X^{\top} M \cdot X \otimes I \cdot X_{\perp} - \left(X^{\top}M \cdot X_{\perp} \otimes I \cdot X\right)\Pi \right]\\
& = \left[I \otimes X , I \otimes X_{\perp} \right]
\end{aligned}
$$
is definitely full-rank.
\end{proof}
 However, it is important to note that in some cases, although $L(t)$ is invertible and both $B_{1}$ and $B_{2}$ are full-rank, the matrix $B_{1} L(t) B_{2}$ may become rank deficient. As a result, the vector transport $\mathcal{T}_{t Z}^{\mathcal{R}}$ is only isomorphic for those values of $t$ that ensure $B_{1} L(t) B_{2}$ is a full-rank matrix. Furthermore, the principle of continuity guarantees the existence of a safe interval around $t=0$ within which $B_{1} L(t) B_{2}$ retains full rankness. 

Now let's consider the calculation of \eqref{TranZk} when $W_Z$ is decomposed into the outer product of two low-rank matrices. Applying the SMW formula yields
$$
\left(I-\frac{t}{2} W_{Z} M\right)^{-1}=\left(I-\frac{t}{2} U V^{\top} M \right)^{-1}=I+\frac{t}{2} U \left(I-\frac{t}{2} V^{\top} M U\right)^{-1} V^{\top}M.
$$
Plugging into \eqref{TranZk} yields
$$
\begin{aligned}
\mathcal{T}_{t Z}^{\mathcal{R}}\left(Z\right) &= W_{Z} M \left(I-\frac{t}{2} W_{Z} M\right)^{-2} X \\
&=U V^{\top} M \left[I+\frac{t}{2} U \left(I-\frac{t}{2} V^{\top} M U\right)^{-1} V^{\top} M \right]^{2} X \\
&=U\left[V^{\top} M X +t V^{\top} M U\left(I-\frac{t}{2} V^{\top} M U\right)^{-1} V^{\top} M X \right.\\
&\left. +\frac{t^{2}}{4}\left(V^{\top} M U\right)^{2}\left(I-\frac{t}{2} V^{\top} M U\right)^{-2} V^{\top} M X\right] \\
&=U \left[V^{\top} M X + t V^{\top} M U\left(I-\frac{t}{2} V^{\top} M U\right)^{-1} V^{\top} M X \right.\\
&\left. +\frac{t}{2}\left(\left(I-\frac{t}{2} V^{\top} M U\right)^{-1}-I\right) V^{\top} M U\left(I-\frac{t}{2} V^{\top} M U\right)^{-1} V^{\top} M X\right] \\
&=U\left[V^{\top} M X+\frac{t}{2} V^{\top} M U\left(I-\frac{t}{2} V^{\top} M U\right)^{-1} V^{\top} M X \right.\\
&\left. +\frac{t}{2}\left(I-\frac{t}{2} V^{\top} M U\right)^{-1} V^{\top} M U\left(I-\frac{t}{2} V^{\top} M U\right)^{-1} V^{\top} M X\right].
\end{aligned}
$$
To simply the notion, we denote 
\begin{equation}\label{M}
M_1 = V^{\top} M X, \quad M_2 = V^{\top} M U,\quad M_3 = \left(I-\frac{t}{2} V^{\top} M U\right)^{-1} V^{\top} M X.
\end{equation}
Then we have that
\begin{equation}\label{TranZk2}
\mathcal{T}_{t Z}^{\mathcal{R}}\left(Z\right)=U\left[M_1 + \frac{t}{2} M_2 M_3 + \frac{t}{2}\left(I-\frac{t}{2} M_2\right)^{-1} M_2 M_3\right].
\end{equation}

\subsection{An isometric vector transport}\label{section:3.4}

Now we introduce an isometric vector transport for the generalized Stiefel manifold, this is motivated by the retraction using the matrix exponential \cite{1, zhu2019matrix}. The mapping is defined as  $\operatorname{Exp}: T_X \operatorname{St}_M \rightarrow \operatorname{St}_M: (X,Z) \mapsto e^{W_Z M} X$, and its corresponding curve denotes
$$
X(t)=\mathcal{R}_{X}^{\operatorname{Exp}}(t Z)=e^{t W_{Z} M} X.
$$
By calculating directly, we have 
\begin{equation}\label{diff}
\frac{\mathrm{d}}{\mathrm{d} t} \mathcal{R}_{X}^{\operatorname{Exp}}\left(t Z\right)=e^{t W_{Z} M} W_{Z}M X=e^{t W_{Z} M} Z,
\end{equation}
where the second equality is still derived by \eqref{repZ}.

The differentiated matrix exponential \eqref{diff} gives us a new computational formula with respect to $\mathcal{T}_{t Z}\left(Z\right)$, that is,
\begin{align}\label{TransZ}
\mathcal{T}_{t Z}\left(Z\right) &= \left(I-\frac{t}{2} W_{Z} M\right)^{-1}\left(I+\frac{t}{2} W_{Z} M\right) Z \notag \\
&=\left(I-\frac{t}{2} W_{Z} M\right)^{-1}\left(I+\frac{t}{2} W_{Z} M\right) W_{Z} M X \notag\\
&=W_{Z} M X(t) .
\end{align}
The main idea of \eqref{TransZ} is to approximate the exponential part $e^{t W_Z M}$ in \eqref{diff} by using the corresponding Cayley transform
$$
\left(I-\frac{t}{2} W_{Z} M\right)^{-1}\left(I+\frac{t}{2} W_{Z}M\right) .
$$
Indeed, we are aware that $\left(I-\frac{t}{2} W_Z M \right)^{-1}\left(I+\frac{t}{2} W_Z M\right)$ represents the first-order diagonal Padé approximant of the matrix exponential $e^{t W_Z M}$ \cite{higham2008functions}. Consequently, we derive a novel isometric vector transport formula:
\begin{equation}\label{TranYZ2}
\mathcal{T}_{Z}(Y)=\left(I-\frac{1}{2} W_{Z} M\right)^{-1}\left(I+\frac{1}{2} W_{Z} M\right) Y,
\end{equation}
which fulfills the requirements stated in \eqref{TransZ}. To support the validity of \eqref{TranYZ2} as an isometric vector transport, we provide the following lemma. This lemma demonstrates that \eqref{TranYZ2} does, in fact, define an isometric vector transport.

\begin{lemma}\label{lem3.3} \revise{\cite[Lemma 3]{39}} \quad The mapping $\mathcal{T}$ defined by
$$
\mathcal{T}_{Z}(Y): T_{X} \operatorname{St}_M \rightarrow T_{\mathcal{R}_{X}(Z)} \mathrm{St}_M : Y \mapsto\left(I-\frac{1}{2} W_{Z} M\right)^{-1}\left(I+\frac{1}{2} W_{Z} M\right) Y
$$
is a vector transport on the generalized Stiefel manifold. Moreover, it is isometric with respect to the newly defined Riemannian metric.
\end{lemma}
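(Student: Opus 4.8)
The plan is to notice that \emph{both} $\mathcal{R}_X(Z)$ and $\mathcal{T}_Z(Y)$ are obtained by left-multiplying by one and the same matrix. Set
\[
Q_Z := \left(I - \tfrac{1}{2}W_Z M\right)^{-1}\left(I + \tfrac{1}{2}W_Z M\right),
\]
so that by \eqref{retrX} we have $\mathcal{R}_X(Z) = Q_Z X$, and by \eqref{TranYZ2} we have $\mathcal{T}_Z(Y) = Q_Z Y$. The whole lemma then rests on the single algebraic identity
\[
Q_Z^{\top} M\, Q_Z = M ,
\]
i.e.\ $Q_Z$ is orthogonal with respect to the bilinear form induced by $M$. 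Once this identity is in hand, every claim — that $\mathcal{T}_Z(Y)$ is again tangent at $\mathcal{R}_X(Z)$, that the three axioms of Definition \ref{def2.3} hold, and that $\mathcal{T}$ is isometric in the sense of Definition \ref{def2.4} — follows in one or two lines.

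First I would record that $I - \tfrac12 W_Z M$ is invertible for every $Z$: since $M$ is symmetric positive definite, $W_Z M$ is similar to $M^{1/2}W_Z M^{1/2}$, which is skew-symmetric because $W_Z$ is, so $W_Z M$ has purely imaginary (or zero) spectrum and $1-\tfrac12\lambda\neq 0$ for each eigenvalue $\lambda$. Hence $Q_Z$ is well defined, and since it is assembled from matrix inversion and multiplication of quantities depending smoothly on $(X,Z)$, the map $\mathcal{T}$ is smooth on $T\operatorname{St}_M\oplus T\operatorname{St}_M$. Next I would prove $Q_Z^{\top} M Q_Z = M$. Using the skew-symmetry of $W_Z$, one gets $\bigl(I-\tfrac12 W_Z M\bigr)^{\top}=I+\tfrac12 M W_Z$ and $\bigl(I+\tfrac12 W_Z M\bigr)^{\top}=I-\tfrac12 M W_Z$, whence $Q_Z^{\top}=\bigl(I-\tfrac12 M W_Z\bigr)\bigl(I+\tfrac12 M W_Z\bigr)^{-1}$. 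Combining this with the intertwining identities $\bigl(I\pm\tfrac12 M W_Z\bigr)M = M\bigl(I\pm\tfrac12 W_Z M\bigr)$ lets me pull $M$ through the two factors of $Q_Z^{\top}$; after that, all the remaining factors are commuting polynomials (and inverses of polynomials) in the single matrix $W_Z M$, so the product collapses to $M$.

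With $Q_Z^{\top} M Q_Z = M$ established I would finish as follows. \textit{Tangency:} for $Y\in T_X\operatorname{St}_M$ we have $X^{\top}MY + Y^{\top}MX = 0$, hence $(Q_Z X)^{\top}M(Q_Z Y) + (Q_Z Y)^{\top}M(Q_Z X) = X^{\top}MY + Y^{\top}MX = 0$, so $\mathcal{T}_Z(Y)=Q_Z Y\in T_{\mathcal{R}_X(Z)}\operatorname{St}_M$; this simultaneously shows that the foot of $\mathcal{T}_Z(Y)$ is $\mathcal{R}_X(Z)$, so the Cayley retraction \eqref{retrX} is the retraction connected with $\mathcal{T}$ (axiom~1). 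Axiom~2 holds because $W_0=0$ gives $Q_0=I$ and $\mathcal{T}_0(Y)=Y$; axiom~3 (linearity) is immediate since $\mathcal{T}_Z$ is left-multiplication by the fixed matrix $Q_Z$. Finally, \textit{isometry:} $\langle\mathcal{T}_Z(Y),\mathcal{T}_Z(Y)\rangle_{\mathcal{R}_X(Z)} = \operatorname{tr}\!\bigl(Y^{\top}Q_Z^{\top}M Q_Z Y\bigr) = \operatorname{tr}\bigl(Y^{\top}MY\bigr) = \langle Y,Y\rangle_X$.

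The only genuinely non-trivial step is the identity $Q_Z^{\top} M Q_Z = M$, and the care it requires comes precisely from the non-standard metric: $M$ and $W_Z$ do not commute individually, so one cannot simply cancel inverses as in the $M=I$ case of \cite{39}. The device is to move $M$ past each factor using $\bigl(I\pm\tfrac12 M W_Z\bigr)M = M\bigl(I\pm\tfrac12 W_Z M\bigr)$, after which everything left is a function of $W_Z M$ alone and hence commutes; one must also keep track of the invertibility of $I-\tfrac12 W_Z M$ (justified above) so that all the inverses written down actually exist. Everything else in the argument is formal.
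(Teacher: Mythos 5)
Your proposal is correct, and its core coincides with the paper's: everything reduces to showing that the Cayley factor $Q_Z=\left(I-\tfrac12 W_ZM\right)^{-1}\left(I+\tfrac12 W_ZM\right)$ preserves the $M$-inner product. You get there by isolating the identity $Q_Z^{\top}MQ_Z=M$ and proving it with the intertwining relations $\left(I\pm\tfrac12 MW_Z\right)M=M\left(I\pm\tfrac12 W_ZM\right)$, after which all factors are functions of $W_ZM$ and commute; the paper instead verifies the isometry directly by inserting $M^{1/2}M^{-1/2}$ so that every factor becomes a function of the skew-symmetric matrix $M^{1/2}W_ZM^{1/2}$ and the product collapses. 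These are two equivalent devices for the same cancellation, so the difference is one of packaging rather than substance. Your packaging buys a little more: since both $\mathcal{R}_X(Z)=Q_ZX$ in \eqref{retrX} and $\mathcal{T}_Z(Y)=Q_ZY$ in \eqref{TranYZ2} are multiplication by the same $M$-orthogonal matrix, the identity $Q_Z^{\top}MQ_Z=M$ simultaneously gives tangency at the correct foot, the three axioms of Definition \ref{def2.3}, and the isometry of Definition \ref{def2.4}, together with the invertibility of $I-\tfrac12 W_ZM$ via the purely imaginary spectrum of $W_ZM$ — points the paper's proof leaves implicit (it only carries out the isometry computation, citing the Stiefel-manifold antecedent for the vector-transport axioms). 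So your argument is, if anything, the more complete write-up of the same lemma.
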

\begin{proof}

It follows from \eqref{TranYZ2} that 
$$
\begin{aligned}
& \left\langle\mathcal{T}_Z(Y), T_Z(Y)\right\rangle \\
= & \operatorname{tr}\left(T_Z(Y)^{\top} M T_Z(Y)\right) \\
= & \operatorname{tr}\left(Y^{\top}\left(I-\frac{1}{2} M W_Z\right)\left(I+\frac{1}{2} M W_Z\right)^{-1} M\left(I-\frac{1}{2} W_Z M\right)^{-1}\left(I+\frac{1}{2} W_Z M\right) Y\right) \\
= & \operatorname{tr}\left(Y^{\top}\left(I-\frac{1}{2} M^{\frac{1}{2}} M^{\frac{1}{2}} W_Z M^{\frac{1}{2}} M^{-\frac{1}{2}}\right)\left(I+\frac{1}{2} M^{\frac{1}{2}} M^{\frac{1}{2}} W_Z M^{\frac{1}{2}} M^{-\frac{1}{2}}\right)^{-1} M\right. \\
& \left.\quad\left(I-\frac{1}{2} M^{-\frac{1}{2}} M^{\frac{1}{2}} W_Z M^{\frac{1}{2}} M^{\frac{1}{2}}\right)^{-1}\left(I+\frac{1}{2} M^{-\frac{1}{2}} M^{\frac{1}{2}} W_Z M^{\frac{1}{2}} M^{\frac{1}{2}}\right) Y\right) \\
= & \operatorname{tr}\left(Y^{\top} M^{\frac{1}{2}}\left(I-\frac{1}{2} M^{\frac{1}{2}} W_Z M^{\frac{1}{2}}\right)\left(I+\frac{1}{2} M^{\frac{1}{2}} W_Z M^{\frac{1}{2}}\right)^{-1}\right. \\
& \left.\quad\left(I-\frac{1}{2} M^{\frac{1}{2}} W_Z M^{\frac{1}{2}}\right)^{-1}\left(I+\frac{1}{2} M^{\frac{1}{2}} W_Z M^{\frac{1}{2}}\right) M^{\frac{1}{2}} Y\right) \\
= & \operatorname{tr}\left(Y^{\top} M Y\right)=\langle Y, Y\rangle .
\end{aligned}
$$
Then we complete the proof.

\end{proof}

\begin{remark}\label{rem3.1} \quad Since $\mathcal{T}$ in \eqref{TranYZ2} is isometric, it also satisfies the Ring-Wirth non-expansive condition \eqref{RWne}.
\end{remark}
Isometric vector transports $\mathcal{T}$ can preserve orthogonality with respect to $M$ and therefore make $\mathcal{T}_{\eta}: T_{X} \operatorname{St}_M \rightarrow T_{\mathcal{R}_{X}(\eta)} \operatorname{St}_M$ an isomorphism. This implies that if \revise{$\xi_{1}, \dots, \xi_{d}$} form an orthonormal basis of $T_{X} \operatorname{St}_M$ with respect to $M$, where $d$ is the dimension of $\operatorname{St}_M$, then \revise{$\mathcal{T}_{\mathcal{R}_{X}(\eta)}\left(\xi_{1}\right), \dots, \mathcal{T}_{\mathcal{R}_{X}(\eta)}\left(\xi_{d}\right)$} also form an orthonormal basis of $T_{\mathcal{R}_{X}(\eta)} \operatorname{St}_M$ with respect to $M$ \cite{39}. 

Now let's consider the calculation of \eqref{TransZ} for low-rank matrices. One can also apply \eqref{scheme1} to get a improvement of \eqref{TransZ} as follows
$$
\mathcal{T}_{t Z}\left(Z\right)=U V^{\top} M X(t)=U\left[V^{\top} M X + t V^{\top} M U\left(I-\frac{t}{2} V^{\top} M U\right)^{-1} V^{\top} M X\right].
$$
Using $M_1, M_2$ and $M_3$ in \eqref{M} yields
\begin{equation}\label{Transport}
\mathcal{T}_{t Z}\left(Z\right)=U \left(M_1+t M_2 M_3\right).
\end{equation}

\subsection{ Relation between the new vector transports}\label{section:3.5}
\revise{In Subsection \ref{section:3.3} and \ref{section:3.4} we introduced} two new vector transports, respectively. Next, we investigate the relationship between these two vector transport methods. The first one, denoted by \eqref{TranZk}, is based on a logical concept that involves creating a vector transport through the differential retraction defined in \eqref{tranR}. The method allows for parallel translation along a geodesic 
$$
{\mathcal{P}}_{\Gamma}^{t \leftarrow 0} \eta_X:=\frac{\mathrm{d}}{\mathrm{d} t} \operatorname{Exp}\left(t \eta_X\right),
$$
which is the basis of this notion \cite{39}. Here, $\Gamma(t)=\operatorname{Exp}\left(t \eta_X\right)$ represents the geodesic starting from $X$ with \revise{an initial velocity} $\dot{\Gamma}(0)=\eta_X \in T_X \operatorname{St}_M$, and $\operatorname{Exp}$ denotes the Riemannian exponential. Although the second vector transport method, as shown in \eqref{TransZ}, is connected to the same retraction as the first method, there is considerable uncertainty regarding whether it provides a direction of comparable quality to the previous search direction, as given by \eqref{TranZk}.

Let $\theta$ denote the angle between $\mathcal{T}_{t Z}\left(Z\right)$ and $\mathcal{T}_{t Z}^{\mathcal{R}}\left(Z\right)$. We contrast the two vector transports by computing this angle. Even though we are immediately aware that $\theta\rightarrow 0$ as $t \rightarrow 0$ based on the fact
$$
\operatorname{lim}\limits_{t \rightarrow 0} \left \| \mathcal{T}_{t Z}\left(Z\right)- \mathcal{T}_{t Z}^{\mathcal{R}}\left(Z\right)\right \|_{\mathcal{R}_{X}(t Z)} = 0
$$
implied by \revise{Definition \ref{def2.3}}, it is important to examine how $\mathcal{T}_{t Z}\left(Z\right)$ diverges from $\mathcal{T}_{t Z}^{\mathcal{R}}\left(Z\right)$ as $t$ increases. \revise{According \eqref{TranZk} and \eqref{TransZ}}, $\mathcal{T}_{t Z}\left(Z\right)$ and $\mathcal{T}_{t Z}^{\mathcal{R}}\left(Z\right)$ can be linked through the equation
\begin{equation}\label{tranvec}
\mathcal{T}_{t Z}\left(Z\right)  = \left(I-\frac{t^2}{4}W_{Z}^2 M^2 \right)\mathcal{T}_{t Z}^{\mathcal{R}}\left(Z\right).
\end{equation}

Before giving the lower bound for $\operatorname{cos} \theta$, we introduce the following lemma.

\begin{lemma}\label{lem3.4} \revise{\cite[Lemma 4]{39}} \quad For any symmetric positive definite matrix $A$ and vector $X$, it holds that 
$$
\frac{X^{\top} A X}{\|X\|_2\|A X\|_2} \geq \frac{1}{\|A\|_2^{1 / 2}\left\|A^{-1}\right\|_2^{1 / 2}} .
$$

\end{lemma}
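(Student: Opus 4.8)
The plan is to reduce the matrix–vector inequality to a scalar statement about the eigenvalues of $A$. Since $A$ is symmetric positive definite, write its spectral decomposition $A = Q\Lambda Q^{\top}$ with $\Lambda = \operatorname{diag}(\lambda_1,\dots,\lambda_n)$, all $\lambda_i>0$, and set $y = Q^{\top}X$, so that $\|X\|_2 = \|y\|_2$, $X^{\top}AX = \sum_i \lambda_i y_i^2$, and $\|AX\|_2^2 = \sum_i \lambda_i^2 y_i^2$. The claimed bound then becomes
\begin{equation}\label{eq:scalar-reduction}
\frac{\sum_i \lambda_i y_i^2}{\left(\sum_i y_i^2\right)^{1/2}\left(\sum_i \lambda_i^2 y_i^2\right)^{1/2}} \geq \frac{1}{\lambda_{\max}^{1/2}\,\lambda_{\min}^{-1/2}} = \left(\frac{\lambda_{\min}}{\lambda_{\max}}\right)^{1/2},
\end{equation}
using $\|A\|_2 = \lambda_{\max}$ and $\|A^{-1}\|_2 = 1/\lambda_{\min}$. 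So the whole problem is the weighted-average inequality \eqref{eq:scalar-reduction} with nonnegative weights $w_i = y_i^2$.

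Next I would recognize the left-hand side as a ratio involving the weighted arithmetic mean of the $\lambda_i$ against the square root of the weighted mean of $\lambda_i^2$. Introducing the probability weights $p_i = w_i/\sum_j w_j$, \eqref{eq:scalar-reduction} is equivalent to
\begin{equation}\label{eq:kantorovich-form}
\left(\sum_i p_i \lambda_i\right)^2 \geq \frac{\lambda_{\min}}{\lambda_{\max}}\sum_i p_i \lambda_i^2 .
\end{equation}
This is a form of (or follows from) the Kantorovich / Polya–Szegő inequality. The cleanest elementary route is the pointwise bound: for every $\lambda \in [\lambda_{\min},\lambda_{\max}]$ one has $(\lambda_{\min} - \lambda)(\lambda_{\max} - \lambda) \le 0$, i.e. $\lambda^2 \le (\lambda_{\min}+\lambda_{\max})\lambda - \lambda_{\min}\lambda_{\max}$. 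Taking the $p_i$-weighted sum gives $\sum_i p_i\lambda_i^2 \le (\lambda_{\min}+\lambda_{\max})\mu - \lambda_{\min}\lambda_{\max}$ where $\mu = \sum_i p_i\lambda_i \in [\lambda_{\min},\lambda_{\max}]$. It then suffices to check the scalar inequality $\mu^2 \ge \frac{\lambda_{\min}}{\lambda_{\max}}\big((\lambda_{\min}+\lambda_{\max})\mu - \lambda_{\min}\lambda_{\max}\big)$, which rearranges to $\lambda_{\max}\mu^2 - \lambda_{\min}(\lambda_{\min}+\lambda_{\max})\mu + \lambda_{\min}^2\lambda_{\max} \ge 0$; the left side factors as $(\mu - \lambda_{\min})(\lambda_{\max}\mu - \lambda_{\min}\lambda_{\max}) = \lambda_{\max}(\mu-\lambda_{\min})\big(\mu - \tfrac{\lambda_{\min}^2}{\lambda_{\max}}\big)$, which is nonnegative because $\mu \ge \lambda_{\min} \ge \lambda_{\min}^2/\lambda_{\max}$. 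Taking square roots of \eqref{eq:kantorovich-form} (both sides nonnegative, $\mu>0$ unless $X=0$) yields the claim.

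I do not expect a serious obstacle here; the only things to be careful about are degenerate cases and the direction of the inequality. If $X = 0$ the statement is vacuous (or interpreted as $0 \ge$ something, which one should simply exclude), so assume $X\ne0$, hence $\mu>0$ and all denominators are positive. One should also make sure the bound is stated sharply: equality in \eqref{eq:kantorovich-form} forces each $\lambda_i$ with $p_i>0$ to lie in $\{\lambda_{\min},\lambda_{\max}\}$ and the weights to balance, which is consistent with the Kantorovich equality case and confirms the constant $(\lambda_{\min}/\lambda_{\max})^{1/2}$ cannot be improved. The mildly delicate bookkeeping step is just the factorization of the quadratic in $\mu$; everything else is routine. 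An alternative, even shorter, argument is to apply the Cauchy–Schwarz inequality to the vectors with components $\sqrt{w_i}\,\sqrt{\lambda_i}$ and $\sqrt{w_i}/\sqrt{\lambda_i}$ after first reducing via the substitution $u = A^{1/2}X$, giving $X^{\top}AX = \|u\|_2^2$, $\|X\|_2^2 = u^{\top}A^{-1}u$, $\|AX\|_2^2 = u^{\top}A u$, and then invoking the standard Kantorovich inequality $(u^{\top}A u)(u^{\top}A^{-1}u) \le \tfrac{(\lambda_{\min}+\lambda_{\max})^2}{4\lambda_{\min}\lambda_{\max}}\|u\|_2^4$; since $\tfrac{(\lambda_{\min}+\lambda_{\max})^2}{4\lambda_{\min}\lambda_{\max}} \ge \tfrac{\lambda_{\max}}{\lambda_{\min}}$ is false in general, one actually wants the weaker bound $\|AX\|_2^2\|X\|_2^{-2} \le \|A\|_2\|A^{-1}\|_2 (X^{\top}AX)^2\|X\|_2^{-4}$ which is exactly \eqref{eq:scalar-reduction} rearranged — so I would present the self-contained eigenvalue argument above rather than chase constants through Kantorovich.
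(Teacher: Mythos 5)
Your reduction to the weighted scalar inequality $\bigl(\sum_i p_i\lambda_i\bigr)^2 \ge \tfrac{\lambda_{\min}}{\lambda_{\max}}\sum_i p_i\lambda_i^2$ is sound, but the step that is supposed to close it — the factorization of $q(\mu)=\lambda_{\max}\mu^2-\lambda_{\min}(\lambda_{\min}+\lambda_{\max})\mu+\lambda_{\min}^2\lambda_{\max}$ — is wrong. Expanding your first form gives $(\mu-\lambda_{\min})(\lambda_{\max}\mu-\lambda_{\min}\lambda_{\max})=\lambda_{\max}(\mu-\lambda_{\min})^2$, whose linear coefficient is $-2\lambda_{\min}\lambda_{\max}$ rather than $-\lambda_{\min}(\lambda_{\min}+\lambda_{\max})$; your second form $\lambda_{\max}(\mu-\lambda_{\min})\bigl(\mu-\tfrac{\lambda_{\min}^2}{\lambda_{\max}}\bigr)$ has constant term $\lambda_{\min}^3$ rather than $\lambda_{\min}^2\lambda_{\max}$, and the two forms are not even equal to each other. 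In fact $q$ cannot factor over the reals when $\lambda_{\min}<\lambda_{\max}$: its discriminant is $\lambda_{\min}^2\bigl((\lambda_{\min}+\lambda_{\max})^2-4\lambda_{\max}^2\bigr)=\lambda_{\min}^2(\lambda_{\min}-\lambda_{\max})(\lambda_{\min}+3\lambda_{\max})\le 0$. The gap is easily repaired: the non-positive discriminant together with $\lambda_{\max}>0$ already gives $q(\mu)\ge 0$ for all $\mu$, or more elementarily $q(\mu)=\lambda_{\max}\mu(\mu-\lambda_{\min})+\lambda_{\min}^2(\lambda_{\max}-\mu)\ge 0$ for $\mu\in[\lambda_{\min},\lambda_{\max}]$. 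With that one-line correction your argument goes through. Your sharpness remark should be dropped, however: since $q>0$ when $\lambda_{\min}<\lambda_{\max}$, equality cannot occur, and the constant is genuinely not optimal — the Kantorovich inequality yields the larger lower bound $2\sqrt{\lambda_{\min}\lambda_{\max}}/(\lambda_{\min}+\lambda_{\max})\ge\sqrt{\lambda_{\min}/\lambda_{\max}}$, so the lemma states a deliberately weaker, non-sharp bound.

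Separately, the route you set aside at the end is essentially the paper's proof, and it needs neither diagonalization nor any Kantorovich-type inequality: put $\tilde X=A^{1/2}X$, so that $X^{\top}AX=\|\tilde X\|_2^2$, $\|X\|_2=\|A^{-1/2}\tilde X\|_2\le\|A^{-1/2}\|_2\|\tilde X\|_2=\|A^{-1}\|_2^{1/2}\|\tilde X\|_2$ and $\|AX\|_2=\|A^{1/2}\tilde X\|_2\le\|A\|_2^{1/2}\|\tilde X\|_2$; dividing gives the lemma in two lines. Your spectral argument, once patched, is correct but does strictly more work for the same constant.
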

\begin{proof}
We know that the matrices $A^{1 / 2}$ and $A^{-1 / 2}$ exist, and also $\left\|A^{1 / 2}\right\|_2=$ $\|A\|_2^{1 / 2}$ and $\left\|A^{-1 / 2}\right\|_2=\left\|A^{-1}\right\|_2^{1 / 2}$ since $A$ is symmetric positive definite. Let $\tilde{X}=A^{1 / 2} X$, then we have
$$
\begin{aligned}
\frac{X^{\top}  X}{\|X\|_2\|A X\|_2} &=\frac{\tilde{X}^{\top} \tilde{X}}{\left\|A^{-1 / 2} \tilde{X}\right\|_2\left\|A^{1 / 2} \tilde{X}\right\|_2} \\
& \geq \frac{\|\tilde{X}\|_2^2}{\left\|A^{-1 / 2}\right\|_2\|\tilde{X}\|_2\left\|A^{1 / 2}\right\|_2\|\tilde{X}\|_2} \\
&=\frac{1}{\|A\|_2^{1 / 2}\left\|A^{-1}\right\|_2^{1 / 2}},
\end{aligned}
$$
which completes the proof.
\end{proof}

\begin{lemma}
    For any $X\in \operatorname{St}_M$ and $Z\in T_X\operatorname{St}_M$, it holds that
    $$
    \cos \theta \geq \sqrt{\frac{4+t^2 \beta_{1}^2 \gamma_{1}^2}{4+t^2 \beta_{n}^2 \gamma_{n}^2}},
    $$
    where $\beta_1,\beta_n,\gamma_1,\gamma_n$ are defined in the proof.
\end{lemma}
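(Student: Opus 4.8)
The plan is to read off $\cos\theta$ as a Kantorovich-type ratio for a single positive definite operator and then estimate its extreme eigenvalues. Write $Y=\mathcal{R}_X(tZ)$ and $w=\mathcal{T}^{\mathcal{R}}_{tZ}(Z)$; both $\mathcal{T}_{tZ}(Z)$ and $w$ lie in $T_Y\operatorname{St}_M$ by the first property of Definition~\ref{def2.3}. Combining \eqref{TranZk} and \eqref{TransZ} (equivalently, using \eqref{tranvec}) gives $\mathcal{T}_{tZ}(Z)=B\,w$ with $B=I-\tfrac{t^2}{4}(W_ZM)^2$, so by the definition of the angle in the metric \eqref{def:metric} at $Y$,
\[
\cos\theta=\frac{\langle Bw,\,w\rangle_Y}{\|Bw\|_Y\,\|w\|_Y}=\frac{\operatorname{tr}(w^\top B^\top M w)}{\sqrt{\operatorname{tr}(w^\top B^\top M B w)}\,\sqrt{\operatorname{tr}(w^\top M w)}}.
\]

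First I would record the algebraic fact that $B$ is self-adjoint and positive definite with respect to \eqref{def:metric}: since $W_Z$ is skew-symmetric and $M$ is symmetric, $MW_ZM$ is skew-symmetric, hence $\Omega:=W_ZM$ is skew-adjoint for $\langle\cdot,\cdot\rangle_X$, so $\Omega^2$ is self-adjoint and $M$-negative-semidefinite and $B=I-\tfrac{t^2}{4}\Omega^2$ is self-adjoint with spectrum in $[1,\infty)$ (the same positivity observation used in the proof of Lemma~\ref{lem3.2}). Applying the $M^{1/2}$-congruence exactly as in the proof of Lemma~\ref{lem3.3}, and setting $\tilde w=M^{1/2}w$, $\tilde\Omega=M^{1/2}W_ZM^{1/2}$ (skew-symmetric) and $\tilde B=M^{1/2}BM^{-1/2}=I-\tfrac{t^2}{4}\tilde\Omega^2$ (symmetric positive definite, with the same eigenvalues as $B$), the three traces above collapse to $\tilde w^\top\tilde B\tilde w$, $\tilde w^\top\tilde B^2\tilde w$ and $\tilde w^\top\tilde w$, whence $\cos\theta=\tilde w^\top\tilde B\tilde w/(\|\tilde B\tilde w\|_F\|\tilde w\|_F)$.

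Next I would invoke Lemma~\ref{lem3.4} — whose proof goes through verbatim with the Frobenius inner product and $\tilde B$ acting by left multiplication, or one simply vectorizes $\tilde w$ and replaces $\tilde B$ by $I_p\otimes\tilde B$ — to obtain $\cos\theta\ge\|\tilde B\|_2^{-1/2}\|\tilde B^{-1}\|_2^{-1/2}=\sqrt{\lambda_{\min}(B)/\lambda_{\max}(B)}$, using that $\tilde B$ is symmetric positive definite with the same spectrum as $B$. It then remains to pin down $\lambda_{\min}(B)$ and $\lambda_{\max}(B)$. Because $\tilde\Omega$ is skew-symmetric, its nonzero eigenvalues come in pairs $\pm i\mu_k$ with $\mu_k=\sigma_k(\tilde\Omega)$, so $\lambda_{\min}(B)=1+\tfrac{t^2}{4}\sigma_{\min}(\tilde\Omega)^2$ and $\lambda_{\max}(B)=1+\tfrac{t^2}{4}\sigma_{\max}(\tilde\Omega)^2$. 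Define $\gamma_1\le\cdots\le\gamma_n$ to be the eigenvalues of $M$ and $0\le\beta_1\le\cdots\le\beta_n$ the moduli of the eigenvalues of $W_Z$ (whose eigenvalues are $0$ and pure imaginary $\pm i\beta_k$, so $\beta_k=\sigma_k(W_Z)$ by normality). Submultiplicativity of the spectral norm gives $\sigma_{\max}(\tilde\Omega)\le\|M^{1/2}\|_2^2\|W_Z\|_2=\gamma_n\beta_n$, and $\sigma_{\min}(\tilde\Omega)\ge\gamma_1\beta_1$ (trivial if $\beta_1=0$, and otherwise from $\|\tilde\Omega^{-1}\|_2\le\|M^{-1/2}\|_2^2\|W_Z^{-1}\|_2=(\gamma_1\beta_1)^{-1}$). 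Hence $\lambda_{\min}(B)\ge(4+t^2\beta_1^2\gamma_1^2)/4$ and $\lambda_{\max}(B)\le(4+t^2\beta_n^2\gamma_n^2)/4$, and substituting these into the previous display gives the asserted bound.

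The step I expect to be the main obstacle is not any individual estimate but the bookkeeping in the second paragraph: one must verify that the $M^{1/2}$-congruence genuinely reduces all three traces to quadratic forms in the \emph{same} vector $\tilde w$ and the \emph{same} symmetric matrix $\tilde B$ (this is exactly where skew-symmetry of $\tilde\Omega$, i.e.\ skew-adjointness of $W_ZM$ for the non-standard metric, is indispensable), and that the resulting ratio bound is robust to the generic situation $2p<n$, in which $W_Z$ and $\tilde\Omega$ are singular, $\beta_1=0$ and $\lambda_{\min}(B)=1$ — there the inequality still holds because $4+t^2\beta_1^2\gamma_1^2$ reduces to $4$. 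One should also note in passing that $Z\neq0$, so $w\neq0$ and $\theta$ is well defined, and that the quotient of the two one-sided bounds is a valid lower bound for $\lambda_{\min}(B)/\lambda_{\max}(B)$ since both bounds are positive.
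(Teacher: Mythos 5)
Your proposal is correct and takes essentially the same route as the paper: both pass from \eqref{tranvec} to a Kantorovich-type ratio via an $M^{1/2}$ transformation, invoke Lemma \ref{lem3.4}, and finish with spectral estimates involving the eigenvalues of $M$ and $W_Z$. In fact your write-up is more careful at two points where the paper is loose — you work with the correct matrix $I-\tfrac{t^2}{4}(W_ZM)^2$ rather than the literal $W_Z^2M^2$, and you replace the paper's final asserted \emph{equality} (which tacitly treats the extreme eigenvalues of that matrix as the products $1+\tfrac{t^2}{4}\beta_i^2\gamma_i^2$) by genuine one-sided bounds obtained from submultiplicativity, while also handling the singular case $\beta_1=0$.
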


\begin{proof}
As we know above, $M$ is a symmetric positive definite matrix. Moreover, $I-\frac{t^2}{4}W_{Z}^2 M^2$ is also a symmetric matrix, and all its eigenvalues are not less than 1. It follows from \eqref{tranvec} and Lemma \ref{lem3.4} that
$$
\begin{aligned}
\cos \theta &= \frac{\operatorname{tr}\left(\mathcal{T}_{t Z}^{\mathcal{R}}(Z)^{\top} M \left(I-\frac{t^2}{4}W_{Z}^2 M^2\right) \mathcal{T}_{t Z}^{\mathcal{R}}(Z) \right)}{\left\| \mathcal{T}_{t Z}^{\mathcal{R}}(Z) \right\|_X \left\| \left(I-\frac{t^2}{4}W_{Z}^2 M^2 \right)\mathcal{T}_{t Z}^{\mathcal{R}}(Z) \right\|_X} \\
& = \frac{\operatorname{tr}\left(\mathcal{T}_{t Z}^{\mathcal{R}}(Z)^{\top} M \left(I-\frac{t^2}{4}W_{Z}^2 M^2\right) \mathcal{T}_{t Z}^{\mathcal{R}}(Z) \right)}{\left\| M^{\frac{1}{2}}\mathcal{T}_{t Z}^{\mathcal{R}}(Z) \right\|_2 \left\| M^{\frac{1}{2}}\left(I-\frac{t^2}{4}W_{Z}^2 M^2\right)\mathcal{T}_{t Z}^{\mathcal{R}}(Z) \right\|_2}\\
& = \frac{\tilde{Y}^{\top}\tilde{Y}}{\left\| M^{\frac{1}{2}} A^{-\frac{1}{2}} \tilde{Y} \right\|_2 \left\| M^{-\frac{1}{2}} A^{\frac{1}{2}} \tilde{Y} \right\|_2}\\
&\geq \frac{\left\| \tilde{Y} \right\|_2^2}{\left\| M^{\frac{1}{2}} A^{-\frac{1}{2}} \right\|_2 \left\| \tilde{Y} \right\|_2 \left\| M^{-\frac{1}{2}} A^{\frac{1}{2}} \right\|_2 \left\| \tilde{Y} \right\|_2},
\end{aligned}
$$
where $\tilde{Y} = M^{\frac{1}{2}}\left(I-\frac{t^2}{4}W_{Z}^2 M^2\right)^{\frac{1}{2}}\mathcal{T}_{t Z}^{\mathcal{R}}(Z)$, $A = M\left(I-\frac{t^2}{4}W_{Z}^2 M^2\right)$. Furthermore, we have that
$$
\begin{aligned}
\cos \theta & \geq \frac{1}{\left\| \left(I-\frac{t^2}{4}W_{Z}^2 M^2\right)^{-1} \right\|_2^{\frac{1}{2}} \left\| I-\frac{t^2}{4}W_{Z}^2 M^2 \right\|_2^{\frac{1}{2}}}\\
& = \sqrt{\frac{4+t^2 \beta_{1}^2 \gamma_{1}^2}{4+t^2 \beta_{n}^2 \gamma_{n}^2}},
\end{aligned}
$$  
where \revise{$\gamma_{1},\gamma_{2},\dots,\gamma_{n}$} with \revise{$\left|\gamma_{1} \right| \leq \left|\gamma_{2} \right| \leq \dots \leq \left|\gamma_{n} \right|$} are the eigenvalues of $M$ and \revise{$\beta_{1},\beta_{2},\dots,\beta_{n}$} with \revise{$\left|\beta_{1} \right| \leq \left|\beta_{2} \right| \leq \dots \leq \left|\beta_{n} \right|$} are the imaginary parts of the eigenvalues  of $W_{Z}$.
\end{proof}

The above result means that $\theta$ is an acute angle between 0 and $\arccos{\sqrt{\frac{4+t^2 \beta_{1}^2 \gamma_{1}^2}{4+t^2 \beta_{n}^2 \gamma_{n}^2}}}$.
If $\left|\beta_{n} \right|$ and $\left|\gamma_{n} \right|$ are close to both $\left|\beta_{1} \right|$ and $\left|\gamma_{1} \right|$, then $\theta$ must be small. This observation indicates that in comparison to the differentiated retraction $\mathcal{T}_{t Z}^{\mathcal{R}}(Z)$, $\mathcal{T}_{t Z}(Z)$ is at least not a bad direction.

\section{An novel Riemannian conjugate gradient algorithm}\label{section:4}

In this section, we propose a novel conjugate gradient framework designed specifically for the generalized Stiefel manifold. The distinctiveness of the proposed framework lies in two key innovations. Firstly, we derive the Riemannian gradient based on the non-standard metric \eqref{def:metric}. Secondly, we extend the modified PRP conjugate gradient method with nonmonotone conditions to effectively operate on the generalized Stiefel manifold.

\subsection{Riemannian gradient under the non-standard metric}\label{section:3.1}

By the definition of the tangent space for all  $Z \in T_{X} \operatorname{St}_M$, we have that $X^{\top}MZ$ is a skew-symmetric matrix \cite{32}. Then, $T_{X} \operatorname{St}_M$ can also rewritten as 
\begin{equation}\label{tangent2}
T_{X} \operatorname{St}_M=\left\{Z = X \Omega+X_{\perp} K \in \mathbb{R}^{n \times p}: \Omega^{\top}=-\Omega, K \in \mathbb{R}^{(n-p) \times p}\right\},
\end{equation}
where $\Omega$ is a skew-symmetric matrix, $K$ is arbitrary, and $X_{\perp} \in \mathbb{R}^{n \times(n-p)}$ satisfies that its columns are an orthogonal basis for the orthogonal complement of the column space of $X$ with respect to the matrix $M$, i.e., $X_{\perp}^{\top} M  X_{\perp}=I_{n-p}$, and ${X}_{\perp}^{\top} M X=0_{(n-p) \times p}$.

The projection of a matrix $N \in \mathbb{R}^{n \times p}$ onto $T_{X} \operatorname{St}_M$ with $X \in \operatorname{St}_M$ is given by
\begin{equation}\label{proj}
 \operatorname{P}_{T_X} N=\left(I-X X^{\top} M\right) N+X \operatorname{skew}\left(X^{\top} M N\right)=N -X \operatorname{sym}\left(X^{\top} M N\right).
\end{equation}
Then, the Riemannian gradient $\operatorname{grad} f(X)$ has the following projection property. 

\begin{lemma}\cite{1}\label{lem3.1}\quad Let $\operatorname{St}_M$ be an embedded submanifold of a Riemannian manifold  $\overline{\operatorname{St}}_M$. Let $\bar{f}$ be a smooth function defined on a Riemannian manifold $\overline{\operatorname{St}}_M$ and  $f$ be the restriction of $\bar{f}$ to a Riemannian submanifold $\operatorname{St}_M$. The gradient of $f$ is equal to the projection of the gradient of $\bar{f}$ onto $T_{X} \operatorname{St}_M$, that is, $$\operatorname{grad} f(X)=\operatorname{P}_{T_X} \operatorname{grad} \bar{f}(X).$$
\end{lemma}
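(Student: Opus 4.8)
The plan is to invoke the defining property of the Riemannian gradient (Definition \ref{def2.2}) together with the $M$-orthogonal decomposition of the ambient tangent space induced by the non-standard metric \eqref{def:metric}. Throughout I would regard $\overline{\operatorname{St}}_M$ as an open neighbourhood of $\operatorname{St}_M$ inside $\mathbb{R}^{n\times p}$, equipped with the constant metric $\langle U,V\rangle = \operatorname{tr}(U^{\top}M V)$, of which the metric on the embedded submanifold $\operatorname{St}_M$ is the restriction; in this setting the ambient gradient $\operatorname{grad}\bar f(X)$ is the unique matrix in $\mathbb{R}^{n\times p}$ with $\langle \operatorname{grad}\bar f(X),N\rangle_X = \operatorname{D}\bar f(X)[N]$ for all $N\in\mathbb{R}^{n\times p}$. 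It is worth making explicit here the tacit regularity hypothesis that $\operatorname{St}_M$ is an embedded submanifold, so that $T_X\operatorname{St}_M\subseteq T_X\overline{\operatorname{St}}_M$ and the chain rule below is legitimate.

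First I would record the first-order information. Since $f$ is the restriction of $\bar f$ to $\operatorname{St}_M$ and every $\xi\in T_X\operatorname{St}_M$ is also a tangent vector of $\overline{\operatorname{St}}_M$ at $X$, the chain rule gives $\operatorname{D} f(X)[\xi] = \operatorname{D}\bar f(X)[\xi] = \langle \operatorname{grad}\bar f(X),\xi\rangle_X$ for every $\xi\in T_X\operatorname{St}_M$.

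Next I would set up the splitting $\mathbb{R}^{n\times p} = T_X\operatorname{St}_M \oplus (T_X\operatorname{St}_M)^{\perp_M}$ and verify that the operator $\operatorname{P}_{T_X}$ of \eqref{proj} is exactly the orthogonal projector onto $T_X\operatorname{St}_M$ with respect to $\langle\cdot,\cdot\rangle_X$. Using the characterization \eqref{tangent2} one checks that (i) $\operatorname{P}_{T_X}N\in T_X\operatorname{St}_M$ for every $N$, and (ii) $N-\operatorname{P}_{T_X}N = X\operatorname{sym}(X^{\top}M N)$ is $M$-orthogonal to every $\xi\in T_X\operatorname{St}_M$, because $\langle \xi, X\operatorname{sym}(X^{\top}M N)\rangle_X = \operatorname{tr}\big(\xi^{\top}M X\operatorname{sym}(X^{\top}M N)\big)$, and $\xi^{\top}M X$ is skew-symmetric while $\operatorname{sym}(X^{\top}M N)$ is symmetric, so the trace of their product vanishes. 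Decomposing $\operatorname{grad}\bar f(X) = \operatorname{P}_{T_X}\operatorname{grad}\bar f(X) + \big(\operatorname{grad}\bar f(X)-\operatorname{P}_{T_X}\operatorname{grad}\bar f(X)\big)$ with the second summand $M$-orthogonal to $T_X\operatorname{St}_M$, I get $\langle \operatorname{grad}\bar f(X),\xi\rangle_X = \langle \operatorname{P}_{T_X}\operatorname{grad}\bar f(X),\xi\rangle_X$ for all $\xi\in T_X\operatorname{St}_M$. Combining this with the first-order identity yields $\langle \operatorname{P}_{T_X}\operatorname{grad}\bar f(X),\xi\rangle_X = \operatorname{D} f(X)[\xi]$ for all $\xi\in T_X\operatorname{St}_M$; since $\operatorname{P}_{T_X}\operatorname{grad}\bar f(X)\in T_X\operatorname{St}_M$ and $\langle\cdot,\cdot\rangle_X$ is positive definite, the uniqueness clause of Definition \ref{def2.2} forces $\operatorname{grad} f(X) = \operatorname{P}_{T_X}\operatorname{grad}\bar f(X)$.

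I expect the only genuinely substantive point to be step (ii) above — confirming that \eqref{proj} is the $M$-orthogonal projector rather than the Euclidean one — which is precisely where the symmetric/skew-symmetric cancellation built into the non-standard metric is used; the remainder is a direct application of the definition of the Riemannian gradient and is essentially the argument of the cited reference \cite{1} transcribed to the present metric.
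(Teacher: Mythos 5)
Your proof is correct and is essentially the standard argument of the cited reference \cite{1}, transcribed to the metric $\langle U,V\rangle_X=\operatorname{tr}(U^{\top}MV)$: the paper itself states Lemma \ref{lem3.1} by citation without giving a proof, so there is no alternative argument in the text to compare against. Your verification that \eqref{proj} is the $M$-orthogonal projector (membership in $T_X\operatorname{St}_M$ plus the skew--symmetric/symmetric trace cancellation for the residual $X\operatorname{sym}(X^{\top}MN)$), followed by the uniqueness clause of Definition \ref{def2.2}, is exactly the substantive content needed and is sound.
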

Note that $\operatorname{grad} \bar{f}(X)$ is not the standard Euclidean gradient $\nabla \bar{f}(X)$, even though $\bar{f}$ is defined on $\operatorname{St}_M$. The reason is that $\bar{f}$ is defined on the $\operatorname{St}_M$ endowed with a non-standard inner product \cite{32}. According to \cite{1,32}, we have from \eqref{def:metric} that
$$
\operatorname{tr}\left(\operatorname{grad} \bar{f}(X)^{\top} M \xi_X \right) = \left\langle \operatorname{grad} \bar{f}(X), \xi_X \right\rangle_X = \mathrm{D} \bar{f}(X)[\xi_X] = \operatorname{tr}\left(\nabla \bar{f}(X)^{\top} \xi_X \right).
$$
for every $\xi_X \in T_X \operatorname{St}_M$, where $\mathrm{D} \bar{f}(X)$ represents the (Fr$\acute{e}$chet) differential of $\bar{f}$ at $X$, so $\operatorname{grad} \bar{f}(X) = M^{-1} \nabla \bar{f}(X)$.
Let $\tilde{G} = \operatorname{grad}\bar{f}(X)$, and  apply Lemma \ref{lem3.1} to \eqref{proj}, we have
\begin{equation}\label{gradf}
\operatorname{grad} f(X)=\tilde{G}-X \operatorname{sym}\left(X^{\top} M \tilde{G}\right),
\end{equation}
The cost for calculating the Riemannian gradient has two asides: the first one is calculated by first calculating the product of $\nabla \bar{f}(X)$ and $M^{-1}$, the second one is calculating the orthogonal projection on the tangent space.

\subsection{A Riemannian non-monotone CG algorithm on the generalized Stiefel manifold}
\label{section:3.6}

We consider extending the modified PRP conjugate gradient method with nonmonotone condition \cite{zhang2009improved} to the generalized Stiefel manifold. At the $k$-th iteration, we update the $X_{k+1}$ as follows:
\revise{$$
X_{k+1} = \mathcal{R}_{X_k}(t_k Z_k), \quad k \geq 0.
$$}
Here, the step length $t_k > 0$ is defined to satisfy the non-monotone condition \eqref{nonmc}, and $\mathcal{R}$ is computed  via \eqref{scheme1}, i.e.
\begin{equation} \label{R}
    \mathcal{R}_{X_k}(t_k Z_k) = X_k + t_k U_k \left(I - \frac{t_k}{2} V_k^{\top} M U_k\right)^{-1} V_k^{\top} M X_k
\end{equation}
if $p \ll n$. The search directions $Z_k$ are computed as $Z_0 = - \operatorname{grad}f(X_0)$ and 
\revise{$$
Z_{k+1} = -\operatorname{grad} f(X_{k+1}) + \beta_{k+1} \mathcal{T}_{t_k Z_k}(Z_k),
$$}
where $\mathcal{T}_{t_k Z_k}(Z_k)$ is given by \eqref{TranZk2} or \eqref{Transport} as follows:
\begin{equation}\label{T1}
    \mathcal{T}_{t_k Z_k}^{\mathcal{R}}(Z_k) = U_k \left[M_{k,1} + \frac{t_k}{2} M_{k,2} M_{k,3} + \frac{t_k}{2} \left(I - \frac{t_k}{2} M_{k,2}\right)^{-1} M_{k,2} M_{k,3}\right],
\end{equation}
or 
\begin{equation}\label{T2}
    \mathcal{T}_{t_k Z_k}(Z_k) = U_k\left(M_{k,1} + t_k M_{k,2} M_{k,3}\right),
\end{equation}
$M_{k,1} = V_k^{\top}M X_k, M_{k,2} = V_k^{\top} M U_k, M_{k,3} = \left(I-\frac{t_k}{2}V_k^{\top}M U_k\right)^{-1} V_k^{\top} M X_k.$

We generalize the modified PRP conjugate gradient method \cite{zhang2009improved} to the generalized Stiefel manifold as
\footnotesize{\begin{equation}\label{beta}
\beta_{k+1} = \frac{\left\|\operatorname{grad} f\left(X_{k+1}\right)\right\|_{X_{k+1}}^2 - \frac{\left\|\operatorname{grad} f\left(X_{k+1}\right)\right\|_{X_{k+1}}}{\left\|\operatorname{grad} f\left(X_{k}\right)\right\|_{X_k}} \left|\left\langle\operatorname{grad} f\left(X_{k+1}\right), \mathcal{T}_{t_k Z_k}\left(\operatorname{grad} f\left(X_{k}\right)\right) \right\rangle_{X_{k+1}}\right|}{\left\|\operatorname{grad} f\left(X_{k}\right)\right\|_{X_k}^2},
\end{equation}}
\normalsize{ And we extend the non-monotone line search in \cite{dai2002nonmonotone} to manifolds. Let $0<\lambda_1<\lambda_2, \sigma, \delta \in (0,1)$, then set $t_k = \bar{t}_k \sigma^{h_k}$ where $\bar{t}_k \in (\lambda_1, \lambda_2)$ is the a prior steplength and $h_k$ is the smallest nonnegative integer. The generalization of this non-monotone condition is }
\begin{equation}\label{nonmc}
f\left(\mathcal{R}_{X_k}\left(t_k Z_k\right)\right) \leq \max \left\{\revise{f\left(X_k\right), \dots, f\left(X_{k-\min \{q-1, k\}}\right)}\right\}+\delta t_k\left\langle\operatorname{grad} f\left(X_k\right), Z_k\right\rangle_{X_k},
\end{equation}
where $q$ is some positive integer. This Armijo-type condition has an advantage over Wolfe conditions in that updating $t_k$ does not need to compute any vector transport for \eqref{nonmc} at all.  The Riemannian non-monotone CG algorithm on the generalized Stiefel manifold is given in Algorithm \ref{alg:cg}.

\begin{algorithm}[H]
\caption{A Riemannian non-monotone CG algorithm on the generalized Stiefel manifold }\label{alg:cg}
\begin{algorithmic}[1]
\REQUIRE Choose an initial point $X_0 \in \operatorname{St}_M$ and $M, t_{\max } > t_0 > t_{\min}>0, q \in \mathbb{N}^{+}, \delta,\epsilon,\lambda \in (0,1)$.
\STATE Set $Z_0 = -\operatorname{grad} f(X_0),k=0$.
\WHILE {$\left\|\operatorname{grad} f(X_{k})\right\|_{X_k} > \epsilon$}
\IF{$Z_k$ satisfies condition \eqref{nonmc}}
\STATE  Set $X_{k+1} = \mathcal{R}_{X_k}\left(t_k Z_k\right)$, where $\mathcal{R}$ is computed by \eqref{R} if $p \ll n$.
\ELSE
\STATE Set $t_k \leftarrow \lambda t_k$ and go to line 3.
\ENDIF
\STATE Compute 
$$Z_{k+1} = -\operatorname{grad} f(X_{k+1}) + \beta_{k+1} \mathcal{T}_{t_k Z_k}(Z_k),$$
where $\mathcal{T}_{t_k Z_k}\left(Z_k\right)$ is given by \eqref{T1} or \eqref{T2}, $\beta_{k+1}$ is computed by \eqref{beta}. 
\STATE Update $t_{k+1} = \bar{t}_{k+1}$, where $\bar{t}_{k+1}$ is defined by \eqref{stepsize-initial}.
\STATE Set $k \leftarrow k+1$.
\ENDWHILE
\end{algorithmic}
\end{algorithm}

The initial step size $\bar{t}_{k+1}^{\mathrm{init}}$ at iteration $k+1$ is simply determined to be
\begin{equation}\label{stepsize-initial}
\bar{t}_{k+1}^{\mathrm{init}}=\max \left\{\min \left\{t_{k+1}^{\mathrm{BB}}, t_{\max }\right\}, t_{\min }\right\}.
\end{equation}
Here $t_{k+1}^{\mathrm{BB}}$ is the Riemannian Barzilai-Borwein   stepsize \cite{iannazzo2018riemannian}  defined as follows
$$
t_{k+1}^{\mathrm{BB}}=\frac{\left\langle S_k, S_k\right\rangle_{X_{k+1}}}{\left|\left\langle Y_k, S_k\right\rangle_{X_{k+1}}\right|},
$$
where $S_k = t_k Z_k$ and $Y_k = \operatorname{grad} f(X_{k+1})- \mathcal{T}_{t_k Z_k}\left(\operatorname{grad} f(X_k)\right)$. 

\subsection{Global convergence analysis}

In the section, we demonstrate the global convergence of the proposed algorithm on the generalized Stiefel manifold, where $\operatorname{St}_M$ represents an arbitrary compact Riemannian manifold, and the vector transport $\mathcal{T}$ and the retraction $\mathcal{R}$ are not specified. The following convergence analysis generalizes the Euclidean case in \cite{dai2002nonmonotone}. We begin by outlining our assumption regarding the objective function $f$.

\begin{assumption}\label{assum4.1}\cite{zhang2006global}\quad The objective function $f$ is bounded below and continuously differentiable on $\operatorname{St}_M$. Namely, there exists a Lipschitz constant $L>0$ such that 
$$
\left\| \operatorname{grad} f\left(X\right) - \mathcal{T}_{t \eta} \left(\operatorname{grad} f\left(Y\right)\right) \right\|_X \leq L \operatorname{dist}(X, Y), \quad \forall X, Y \in \operatorname{St}_M,
$$
where $\eta \in T_X \operatorname{St}_M$, ``dist" represents the geodesic distance on $\operatorname{St}_M$.
\end{assumption} 

In addition, we can get from Assumption \ref{assum4.1} and the generalized Stiefel manifold $\operatorname{St}_M$ is an arbitrarily compact manifold that there is a constant $\kappa > 0$, such that
\begin{equation}\label{kappa}
    \left\| \operatorname{grad} f (X)\right\|_X \leq \kappa, \quad \forall X \in \operatorname{St}_M,
\end{equation}

\begin{assumption}\label{assum4.2}\quad The vector tranport $\mathcal{T}$ satisfies the Ring-Wirth non-expansive condition
\revise{$$ 
\left\langle\mathcal{T}_{t_k Z_k}\left(Z_k\right), \mathcal{T}_{t_k Z_k}\left(Z_k\right)\right\rangle_{\mathcal{R}_{X_k}\left(t_k Z_k\right)} \leq\left\langle Z_k, Z_k\right\rangle_{X_k}.
$$}
for all $k$.
\end{assumption}

\begin{remark}\label{rem4.1}\quad Note that we have shown in Lemmas \ref{lem3.2} and \ref{lem3.3} that for the generalized Stiefel manifold, the vector transports \eqref{TranYZ} and \eqref{TranYZ2} both satisfy \revise{Assumption \ref{assum4.2}}.
\end{remark}

First, we can obtain the following lemma for non-monotone line search \eqref{nonmc}.

\begin{lemma}\label{lem4.2}\quad Suppose that $f$ satisfies Assumption \ref{assum4.1}. Consider iterative method $X_{k+1} = \mathcal{R}_{X_k}(t_k Z_k)$, where $Z_k$ is a descent direction and $t_k$ is obtained by the nonmontone condition \eqref{nonmc}. Then, for any $j \geq 1$, 
\begin{align} \label{formula2}
    &\max\limits_{1 \leq i \leq q} f(X_{qj + i -1}) \notag\\
    &\leq \max\limits_{1 \leq i \leq q} f(X_{q(j-1) + i -1}) + \delta  \max\limits_{0 \leq i \leq q-1} \{ t_{qj+i-1} \left\langle \operatorname{grad} f(X_{qj+i-1}), Z_{qj+i-1} \right\rangle_{X_{qj+i-1}} \}.
\end{align}
Furthermore, we have
\footnotesize{ 
\begin{equation}\label{lemma4.1}
\sum\limits_{j \geq 1} \min\limits_{0 \leq i \leq q-1} \left\{\left|\left\langle \operatorname{grad} f(X_{qj+i-1}), Z_{qj+i-1}  \right\rangle_{X_{qj+i-1}}\right|, \frac{\left( \left\langle\operatorname{grad} f\left(X_{qj+i-1}\right), Z_{qj+i-1}\right\rangle_{X_{qj+i-1}}\right)^2}{\left\|Z_k\right\|_{X_{qj+i-1}}^2}\right\} < +\infty.
\end{equation} 
}
\end{lemma}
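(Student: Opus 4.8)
The plan is to mimic the classical argument of Dai--Yuan \cite{dai2002nonmonotone} for the non-monotone Armijo line search, now written in the Riemannian language. First I would establish \eqref{formula2}: fix $j\ge 1$ and any index $i$ with $1\le i\le q$, so that the iterate $X_{qj+i-1}$ was produced from $X_{qj+i-2}$ via the step $t_{qj+i-2}$ satisfying \eqref{nonmc}. Applying \eqref{nonmc} at that step gives
\begin{equation*}
f(X_{qj+i-1}) \le \max_{0\le \ell \le \min\{q-1,\,qj+i-2\}} f(X_{qj+i-2-\ell}) + \delta\, t_{qj+i-2}\left\langle \operatorname{grad} f(X_{qj+i-2}), Z_{qj+i-2}\right\rangle_{X_{qj+i-2}}.
\end{equation*}
The point is that for $1\le i\le q$ the window of $q$ predecessors of $X_{qj+i-1}$ is contained in the union of the previous block $\{X_{q(j-1)},\dots,X_{q(j-1)+q-1}\}$ and the current partial block; a short bookkeeping argument (exactly as in the Euclidean proof) shows the max over that window is bounded by $\max_{1\le \ell\le q} f(X_{q(j-1)+\ell-1})$ plus possibly one of the already-accrued negative inner-product terms of the current block. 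Since $Z_k$ is a descent direction all those inner products are negative, so dropping them only weakens the bound; taking the max over $1\le i\le q$ and keeping the \emph{largest} (least negative, i.e. closest to zero) of the descent terms yields \eqref{formula2}. I would be a little careful to phrase this so that the term retained on the right is $\max_{0\le i\le q-1}\{t_{qj+i-1}\langle\operatorname{grad} f(X_{qj+i-1}),Z_{qj+i-1}\rangle\}$, which is exactly what telescoping in the next step needs.

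Second, I would sum \eqref{formula2} over $j=1,\dots,N$. Writing $F_j := \max_{1\le i\le q} f(X_{qj+i-1})$, inequality \eqref{formula2} reads $F_j \le F_{j-1} + \delta\, a_j$ where $a_j := \max_{0\le i\le q-1}\{t_{qj+i-1}\langle\operatorname{grad} f(X_{qj+i-1}),Z_{qj+i-1}\rangle_{X_{qj+i-1}}\} \le 0$. Telescoping gives $F_N \le F_0 + \delta\sum_{j=1}^N a_j$, and since $f$ is bounded below by Assumption \ref{assum4.1} (hence so is $F_N$), we get $\sum_{j\ge 1} (-a_j) < +\infty$, i.e.
\begin{equation*}
\sum_{j\ge 1} \min_{0\le i\le q-1}\left\{ -\,t_{qj+i-1}\left\langle \operatorname{grad} f(X_{qj+i-1}), Z_{qj+i-1}\right\rangle_{X_{qj+i-1}}\right\} < +\infty.
\end{equation*}

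Third, I would convert the finiteness of the sum of $-t_k\langle\operatorname{grad} f,Z_k\rangle$ into the finiteness of the sum of $\min\{\,|\langle\operatorname{grad} f,Z_k\rangle|,\ \langle\operatorname{grad} f,Z_k\rangle^2/\|Z_k\|^2\,\}$ claimed in \eqref{lemma4.1}. This is where the structure of the non-monotone line search enters: for each accepted step $t_k$ there are two cases. Either $h_k=0$, so $t_k=\bar t_k \ge \lambda_1 > 0$ (using $\bar t_k\in(\lambda_1,\lambda_2)$), and then $-t_k\langle\operatorname{grad} f,Z_k\rangle \ge \lambda_1 |\langle\operatorname{grad} f,Z_k\rangle|$; or $h_k\ge 1$, meaning the previous trial $t_k/\sigma$ failed \eqref{nonmc}, which combined with a Taylor/Lipschitz estimate of $f\circ\mathcal R_{X_k}$ along $Z_k$ (valid because $\operatorname{St}_M$ is compact and $f$ is $C^1$ with the Lipschitz-type bound of Assumption \ref{assum4.1}) forces $t_k \ge c\, |\langle\operatorname{grad} f,Z_k\rangle| / \|Z_k\|^2$ for a constant $c>0$, whence $-t_k\langle\operatorname{grad} f,Z_k\rangle \ge c\, \langle\operatorname{grad} f,Z_k\rangle^2/\|Z_k\|^2$. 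In either case $-t_k\langle\operatorname{grad} f,Z_k\rangle \ge \min\{\lambda_1,c\}\cdot\min\{|\langle\operatorname{grad} f,Z_k\rangle|,\ \langle\operatorname{grad} f,Z_k\rangle^2/\|Z_k\|^2\}$, and taking the min over $i$ inside each block and summing over $j$ gives \eqref{lemma4.1}.

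\textbf{Main obstacle.} The delicate point is the block-bookkeeping in \eqref{formula2} --- tracking precisely which earlier iterates enter the max over the sliding window of length $q$ and checking that the only ``leftover'' terms are the current block's descent terms (which are harmless because they are negative). The second genuinely non-trivial ingredient is the failed-trial-step estimate in Case $h_k\ge1$: one needs a clean one-dimensional mean-value bound for $t\mapsto f(\mathcal R_{X_k}(tZ_k))$ together with the Lipschitz-type Assumption \ref{assum4.1} and compactness of $\operatorname{St}_M$ to produce the lower bound on $t_k$ uniformly in $k$; this is standard but is the place where the Riemannian setting must be handled with a little care (e.g. bounding $\operatorname{dist}(X_k,\mathcal R_{X_k}(tZ_k))$ by $O(t\|Z_k\|)$ uniformly).
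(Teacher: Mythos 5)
Your proposal follows essentially the same route as the paper's proof: the block-wise inductive bookkeeping for \eqref{formula2}, summation together with boundedness below of $f$ to obtain finiteness of $\sum_j\min_i\{-t_{qj+i-1}\langle\operatorname{grad} f,Z\rangle\}$, and then the two-case lower bound on $t_k$ (accepted initial step $\ge\lambda_1$ versus a failed trial step combined with a Taylor/Lipschitz estimate of $f\circ\mathcal{R}_{X_k}$ yielding $t_k\gtrsim|\langle\operatorname{grad} f(X_k),Z_k\rangle_{X_k}|/\|Z_k\|_{X_k}^2$), which together give \eqref{lemma4.1}. No substantive differences or gaps relative to the paper's argument.
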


\begin{proof}
In order to prove \eqref{formula2}, it is sufficient to prove that the following inequality holds for \revise{$l=1,\dots,m$}:
\begin{equation} \label{lem1}
    f(X_{qj+l-1}) \leq \max\limits_{1 \leq i \leq q} f(X_{q(j-1) + i -1}) + \delta t_{qj+l-2} \left\langle \operatorname{grad} f(X_{qj+l-2}), Z_{qj+l-2} \right\rangle_{X_{qj+l-2}}.
\end{equation}
Since the nonmonotonic condition \eqref{nonmc} imply 
\revise{$$
    f(X_{qj}) \leq \max\limits_{0 \leq i \leq q-1} f(X_{qj-i-1}) + \delta t_{qj-1} \left\langle \operatorname{grad} f(X_{qj-1}), Z_{qj-1} \right\rangle_{X_{qj-1}} 
$$}
it follows from this that \eqref{lem1} holds for $l=1$. Suppose that \eqref{lem1} holds for all $1\leq i \leq l$, for some $1 \leq l \leq q-1$. With the descent property of $Z_k$, this implies 
\begin{equation} \label{lem2}
    \max\limits_{1\leq i \leq l}f(X_{qj+i-1}) \leq \max\limits_{1\leq i \leq q}f(X_{q(j-1)+i-1}).
\end{equation}
By \eqref{nonmc} and \eqref{lem2}, we obtain
$$
\begin{aligned}
    f(X_{qj+l}) & \leq \max\limits_{0 \leq i \leq q-1} f(X_{qj+l-i-1}) + \delta t_{qj+l-1} \left\langle \operatorname{grad} f(X_{qj+l-1}), Z_{qj+l-1} \right\rangle_{X_{qj+l-1}}\\
    & \leq \max \left\{\max\limits_{1\leq i \leq q}f(X_{q(j-1)+i-1}), \max\limits_{1\leq i \leq l}f(X_{qj+i-1}) \right\} \\
    & \quad + \delta t_{qj+l-1} \left\langle \operatorname{grad} f(X_{qj+l-1}), Z_{qj+l-1} \right\rangle_{X_{qj+l-1}}\\
    & \leq \max\limits_{1\leq i \leq q}f(X_{q(j-1)+i-1}) + \delta t_{qj+l-1} \left\langle \operatorname{grad} f(X_{qj+l-1}), Z_{qj+l-1} \right\rangle_{X_{qj+l-1}},
\end{aligned}
$$
which means \eqref{lem1} holds for $l+1$ and therefore holds for \revise{$i=1,\dots,q$}. Thus, \eqref{formula2} holds.

Since $f(X)$ is bounded below, it follows that
$$
\max\limits_{1\leq i \leq q}f(X_{ql+i-1}) > - \infty
$$
By summing \eqref{formula2} over $j$, we can get
\begin{equation}\label{equ4.15}
\sum\limits_{j \geq 1} \min\limits_{0 \leq i \leq q-1} \left\{-t_{q j+i-1}\left\langle \operatorname{grad} f(X_{q j+i-1}), Z_{q j+i-1}  \right\rangle_{X_{q j+i-1}} \right\} < +\infty.
\end{equation}
Suppose that \eqref{nonmc} is false for the prior trial steplength $\bar{t}_k$. Then, we get
\begin{align}\label{equ4.16}
    &f\left(\mathcal{R}_{X_k}\left((t_k / \sigma) Z_k\right)\right) \\
    &\geq \max \left\{\revise{f\left(X_k\right), \dots, f\left(X_{k-\min \{q-1, k\}}\right)}\right\}+\delta (t_k / \sigma)\left\langle\operatorname{grad} f\left(X_k\right), Z_k\right\rangle_{X_k},\notag\\
    &\geq f(X_k) + \delta (t_k / \sigma) \left\langle\operatorname{grad} f\left(X_k\right), Z_k\right\rangle_{X_k}.
\end{align}
where $t_k = \sigma \bar{t}_k $. It then follows from Taylor's theorem that
\begin{align}\label{equ4.17}
&f\left(\mathcal{R}_{X_k}\left(t Z_k\right)\right)-f\left(X_k\right) \notag\\
&=f\left(\mathcal{R}_{X_k}\left(t Z_k\right)\right)-f\left(\mathcal{R}_{X_k}\left(0_{X_k}\right)\right) \notag\\
&=t\left\langle\operatorname{grad} f\left(X_k\right), Z_k\right\rangle_{X_k}+\int_0^{t}\left(\mathrm{D}\left(f \circ \mathcal{R}_{X_k}\right)\left(\alpha Z_k\right)\left[Z_k\right]-\mathrm{D}\left(f \circ \mathcal{R}_{X_k}\right)\left(0_{X_k}\right)\left[Z_k\right]\right) \mathrm{d} \alpha \notag\\
& \leq t\left\langle\operatorname{grad} f\left(X_k\right), Z_k\right\rangle_{X_k}+\int_0^{t}\left|\mathrm{D}\left(f \circ \mathcal{R}_{X_k}\right)\left(\alpha Z_k\right)\left[Z_k\right]-\mathrm{D}\left(f \circ \mathcal{R}_{X_k}\right)\left(0_{X_k}\right)\left[Z_k\right]\right| \mathrm{d} \alpha \notag\\
& \leq t\left\langle\operatorname{grad} f\left(X_k\right), Z_k\right\rangle_{X_k}+\frac{1}{2} L t^2\left\|Z_k\right\|_{X_k}^2 \notag\\
& \leq \delta t \left\langle\operatorname{grad} f\left(X_k\right), Z_k\right\rangle_{X_k}, \text{ for all } t \in (0, \frac{2(1-\delta) \left| \left\langle\operatorname{grad} f\left(X_k\right), Z_k\right\rangle_{X_k}\right|}{L\left\|Z_k\right\|_{X_k}^2}).
\end{align}
It follows from \eqref{equ4.16}-\eqref{equ4.17} that
\revise{$$
    t_k / \sigma \geq \frac{2(1-\delta) \left| \left\langle\operatorname{grad} f\left(X_k\right), Z_k\right\rangle_{X_k}\right|}{L\left\|Z_k\right\|_{X_k}^2}
$$}
If \eqref{nonmc} is true for the prior trial steplength $\bar{t}_k$, then $t_k = \bar{t}_k \geq \lambda_1$. Thus, the following relation holds:
\revise{$$
    t_k \geq \min \left\{\lambda_1, \frac{2(1-\delta) \sigma \left| \left\langle\operatorname{grad} f\left(X_k\right), Z_k\right\rangle_{X_k}\right|}{ L \left\|Z_k\right\|_{X_k}^2}\right\}
$$}
which with \eqref{equ4.15} implies the truth of \eqref{lemma4.1}.  
\end{proof} 

According to relation \eqref{formula2}, we can see that for any non-monotone line search, the sequence $\{\max\limits_{1 \leq i \leq q} f(X_{qj + i -1})\}$ is strictly monotonically decreasing and the decrement can be estimated. Since the relation \eqref{lemma4.1} is only about the search direction, our global convergence results take advantage of the following assumptions.

\begin{assumption}\cite{zhang2004nonmonotone,oviedo2022global} \label{assum4.3}
 There exist positive constants $c \in (0,1]$ such that
 \begin{equation} \label{prop1}
     \left\langle \operatorname{grad} f (X_k), Z_k\right\rangle_{X_{k}} \leq -c \left\|\operatorname{grad} f (X_k) \right\|_{X_{k}}^2, \quad \forall k \in \mathbb{N}.
 \end{equation}
 for all sufficiently large $k$.     
\end{assumption}
\begin{remark}
Equation \eqref{prop1} has been proven under different types of $\beta$ in \cite{sato2022riemannian}, and here we can similarly prove it under the strong Wolfe condition \eqref{Wolfe2}. Therefore, we make a direct assumption here. The use of non-monotone line search during the experimentation process is for numerical reasons. Employing non-monotone line search can accelerate the global convergence speed and enhance the global search capability of the algorithm, thus improving its robustness and reliability.  
\end{remark}

Now we establish the global convergence result of Algorithm \ref{alg:cg}.

\begin{theorem}\label{4.1}\quad Suppose Assumptions \ref{assum4.1}, \ref{assum4.2} and \ref{assum4.3} hold and Algorithm \ref{alg:cg} does not terminate in a finite number of iterations. Then the sequence $\{X_k\}$ generated by Algorithm \ref{alg:cg} converges in the sense that
\revise{$$
\liminf\limits_{k \rightarrow \infty}\left\|\operatorname{grad} f\left(X_k\right)\right\|_{X_k}=0 .
$$}
\end{theorem}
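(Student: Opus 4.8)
\noindent The plan is to argue by contradiction. If the conclusion fails there is $\bar\varepsilon>0$ with $\|\operatorname{grad} f(X_k)\|_{X_k}\ge\bar\varepsilon$ for all $k$ large enough that Assumption \ref{assum4.3} is also active (so $Z_k$ is in particular a descent direction, as Lemma \ref{lem4.2} requires), while \eqref{kappa} furnishes the upper bound $\|\operatorname{grad} f(X_k)\|_{X_k}\le\kappa$. Write $g_k=\operatorname{grad} f(X_k)$. The engine of the argument is already in place: Lemma \ref{lem4.2} gives the Zoutendijk-type summability \eqref{lemma4.1}. The plan is to show that, under the standing lower bound $\|g_k\|_{X_k}\ge\bar\varepsilon$, the series in \eqref{lemma4.1} actually diverges, which is the desired contradiction.

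The first ingredient is a Fletcher--Reeves-type bound on the conjugate parameter \eqref{beta}. By Cauchy--Schwarz for $\langle\cdot,\cdot\rangle_{X_{k+1}}$ and the Ring--Wirth estimate $\|\mathcal{T}_{t_kZ_k}(g_k)\|_{X_{k+1}}\le\|g_k\|_{X_k}$ (Assumption \ref{assum4.2}), one gets $\bigl|\langle g_{k+1},\mathcal{T}_{t_kZ_k}(g_k)\rangle_{X_{k+1}}\bigr|\le\|g_{k+1}\|_{X_{k+1}}\|g_k\|_{X_k}$, so the numerator of \eqref{beta} lies in $[0,\|g_{k+1}\|_{X_{k+1}}^2]$ and hence $0\le\beta_{k+1}\le\|g_{k+1}\|_{X_{k+1}}^2/\|g_k\|_{X_k}^2$, exactly the Fletcher--Reeves bound. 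The second, and decisive, ingredient is a growth bound on $Z_k$. From $Z_{k+1}=-g_{k+1}+\beta_{k+1}\mathcal{T}_{t_kZ_k}(Z_k)$ one obtains $\beta_{k+1}\langle g_{k+1},\mathcal{T}_{t_kZ_k}(Z_k)\rangle_{X_{k+1}}=\langle g_{k+1},Z_{k+1}\rangle_{X_{k+1}}+\|g_{k+1}\|_{X_{k+1}}^2$; expanding $\|Z_{k+1}\|_{X_{k+1}}^2$, substituting this identity, and using $\|\mathcal{T}_{t_kZ_k}(Z_k)\|_{X_{k+1}}^2\le\|Z_k\|_{X_k}^2$ together with $\beta_{k+1}^2\le\|g_{k+1}\|_{X_{k+1}}^4/\|g_k\|_{X_k}^4$, then dividing by $\|g_{k+1}\|_{X_{k+1}}^4$, gives
$$\frac{\|Z_{k+1}\|_{X_{k+1}}^2}{\|g_{k+1}\|_{X_{k+1}}^4}\ \le\ \frac{\|Z_k\|_{X_k}^2}{\|g_k\|_{X_k}^4}\ +\ \frac{-2\langle g_{k+1},Z_{k+1}\rangle_{X_{k+1}}-\|g_{k+1}\|_{X_{k+1}}^2}{\|g_{k+1}\|_{X_{k+1}}^4}.$$

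Controlling the last term is the crux. It requires a two-sided estimate $-\langle g_k,Z_k\rangle_{X_k}\le C_1\|g_k\|_{X_k}^2$; as the paper remarks for Assumption \ref{assum4.3}, the strong Wolfe curvature condition \eqref{Wolfe2} yields precisely this, via the geometric-series induction on $-\langle g_k,Z_k\rangle_{X_k}/\|g_k\|_{X_k}^2$ that uses the $\beta$-bound above. Granting it, the last term is $\le 2C_1/\bar\varepsilon^2$, and telescoping from $k=0$ (where $Z_0=-g_0$) yields $\|Z_k\|_{X_k}^2/\|g_k\|_{X_k}^4=O(k)$, hence $\|Z_k\|_{X_k}^2\le C(k+1)$ for a constant $C$, since $\|g_k\|_{X_k}\le\kappa$. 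That this bound is \emph{linear} in $k$ is essential: the naive triangle-inequality recursion for $\|Z_{k+1}\|_{X_{k+1}}$ only gives an unusable $O(k^2)$, so the squaring-and-telescoping device, fed by the curvature estimate, is the technical heart, and the place most likely to need careful bookkeeping --- in particular to reconcile the transport appearing in the curvature condition \eqref{Wolfe2} with the one used to form $Z_{k+1}$.

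Finally I would assemble the pieces. By Assumption \ref{assum4.3} and $\|g_k\|_{X_k}\ge\bar\varepsilon$ we have $|\langle g_k,Z_k\rangle_{X_k}|\ge c\bar\varepsilon^2$ and $\langle g_k,Z_k\rangle_{X_k}^2\ge c^2\bar\varepsilon^4$, so with the direction bound
$$\min\Bigl\{\,|\langle g_k,Z_k\rangle_{X_k}|,\ \frac{\langle g_k,Z_k\rangle_{X_k}^2}{\|Z_k\|_{X_k}^2}\,\Bigr\}\ \ge\ \min\Bigl\{\,c\bar\varepsilon^2,\ \frac{c^2\bar\varepsilon^4}{C(k+1)}\,\Bigr\}.$$
Writing $k=qj+i-1$ with $0\le i\le q-1$, so that $k+1\le q(j+1)$, every entry of the $\min$ over $i$ in \eqref{lemma4.1} is at least $\min\{c\bar\varepsilon^2,\ c^2\bar\varepsilon^4/(Cq(j+1))\}$, which for all large $j$ equals $c^2\bar\varepsilon^4/(Cq(j+1))$. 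Since $\sum_{j\ge1}1/(j+1)=\infty$, the series in \eqref{lemma4.1} diverges, contradicting Lemma \ref{lem4.2}. Hence $\liminf_{k\to\infty}\|\operatorname{grad} f(X_k)\|_{X_k}=0$.
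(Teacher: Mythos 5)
Your overall strategy (contradiction, feed a lower bound on the Zoutendijk-type quantity into the summability \eqref{lemma4.1} of Lemma \ref{lem4.2}, derive divergence from a linear growth bound on $\|Z_k\|_{X_k}^2$) is coherent, but it hinges on a step you explicitly only ``grant'': the upper estimate $-\langle \operatorname{grad} f(X_k),Z_k\rangle_{X_k}\le C_1\|\operatorname{grad} f(X_k)\|_{X_k}^2$. Nothing in the paper's hypotheses delivers this. Algorithm \ref{alg:cg} uses only the non-monotone Armijo-type condition \eqref{nonmc}; the strong Wolfe curvature condition \eqref{Wolfe2}, which is what powers the geometric-series induction you invoke (the Al-Baali-type argument), is never enforced by the line search, and Assumption \ref{assum4.3} gives only the opposite inequality $-\langle \operatorname{grad} f(X_k),Z_k\rangle_{X_k}\ge c\|\operatorname{grad} f(X_k)\|_{X_k}^2$. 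Without the granted bound, the last term in your telescoping inequality can only be controlled via Cauchy--Schwarz as $-\langle g_{k+1},Z_{k+1}\rangle_{X_{k+1}}\le\kappa\|Z_{k+1}\|_{X_{k+1}}$, which reintroduces the unknown and destroys the $O(k)$ bound on $\|Z_k\|_{X_k}^2$ that your final divergence argument needs. A secondary issue: your Fletcher--Reeves-type bound $|\beta_{k+1}|\le\|g_{k+1}\|_{X_{k+1}}^2/\|g_k\|_{X_k}^2$ (needed for $\beta_{k+1}^2$) requires non-expansiveness of $\mathcal{T}_{t_kZ_k}$ applied to $\operatorname{grad} f(X_k)$, whereas Assumption \ref{assum4.2} (and Lemma \ref{lem3.2}) assert the Ring--Wirth property only for the direction $Z_k$ itself; this is automatic for the isometric transport \eqref{TranYZ2} but is not established for the differentiated retraction \eqref{TranYZ}.

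The paper's own proof takes a much shorter route that avoids every estimate you are missing: arguing by contradiction with $\|\operatorname{grad} f(X_k)\|_{X_k}\ge\gamma$, it combines the summability \eqref{equ4.15} obtained in Lemma \ref{lem4.2} directly with the sufficient-descent Assumption \ref{assum4.3} to get summability of $t_k\|\operatorname{grad} f(X_k)\|_{X_k}^2$, and then concludes using the step-size bounds and \eqref{kappa}; no bound on $\|Z_k\|_{X_k}$ or on $\beta_{k+1}$ ever enters. In other words, the sufficient-descent content that your FR-style machinery is trying to manufacture is simply assumed in Assumption \ref{assum4.3}, so the theorem reduces to a Zoutendijk-plus-descent argument. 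Your approach would become viable if the line search were replaced by (or augmented with) the strong Wolfe conditions, or if the two-sided bound on $\langle \operatorname{grad} f(X_k),Z_k\rangle_{X_k}$ were added as a hypothesis; as written, the crucial growth estimate $\|Z_k\|_{X_k}^2\le C(k+1)$ is unsupported, so the proof has a genuine gap.
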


\begin{proof}
We prove the theorem by contradiction. 

Suppose $\liminf_{k \rightarrow \infty}\left\|\operatorname{grad} f\left(X_k\right)\right\|_{X_k} \neq 0$, this means there exists a constant $\gamma \in (0,1)$ such that 
\begin{equation}\label{grad2}
    \left\|\operatorname{grad}f(X_k) \right\|_{X_k} \geq \gamma \text{ for all } k. 
\end{equation}
First, by Lemma \ref{lem4.2}, we know the sequence $f(X_k)$ is strictly monotonically decreasing, and then the sequence $f(X_k)$ converges. And by \eqref{equ4.15}, we have
\revise{$$
    -\sum\limits_{k=0}^{\infty} t_k \left\langle \operatorname{grad}f(X_k), Z_k \right\rangle_{X_k} < + \infty.
$$}
Combining with formula \eqref{prop1}, we get
\revise{$$
    -\sum\limits_{k=0}^{\infty} t_k \left\langle \operatorname{grad}f(X_k), Z_k \right\rangle_{X_k} \leq  \sum\limits_{k=0}^{\infty} c t_k \left\|\operatorname{grad}f(X_k) \right\|_{X_k}^2 < + \infty. 
$$}
Because of $t_k \leq \bar{t}_k \in (\lambda_1, \lambda_2)$ is a finite number and $c$ is a positive constant, we have from \eqref{kappa} that 
$$
\liminf\limits_{k \rightarrow \infty}\left\|\operatorname{grad} f\left(X_k\right)\right\|_{X_k}=0 .
$$
which contradicts \eqref{grad2}. 
This proves the theorem.
\end{proof}

\section{Numerical experiments}\label{section:5}

In this section, we will present the results of numerical experiments to demonstrate the effectiveness of our proposed Algorithm \ref{alg:cg}. We will compare it with the existing Cholesky QR and polar decomposition methods for solving generalized Stiefel manifolds \cite{29}. All algorithms will be utilized to solve generalized eigenvalue problems and canonical correlation analysis problems.

\subsection{Stopping criteria} All the numerical experiments have been performed in MATLAB (R2022b) installed on a computer 12th Gen Intel(R) Core(TM) i7-12700 CPU 2.1 GHz with 16 GB of RAM.
The stop criterion is
$$
\left\|\operatorname{grad}f(X_k) \right\|_{X_k} \leq \epsilon.
$$
The update scheme \eqref{scheme1} may cause numerical instability by the SMW formula, if $X_k$ is detected to be infeasible in the sense of $\left\|X_k^{\top} M X_k - I\right\|_F > \epsilon_c$ for some constant $\epsilon_c > 0$ after the final iteration, then the modified Gram-Schmidt process with a non-standard inner product is used to restore the calculated solutions $X_k$. The default values of the input parameters for Algorithm \ref{alg:cg} are summarized in Table \ref{tab:table_1}.

\begin{table}[htbp]
\centering
\renewcommand{\tablename}{Table}
\caption{Summary of input parameter settings for Algorithm \ref{alg:cg}}
\label{tab:table_1}
\begin{tabular}{lll}

\hline
Parameter & Value & Description\\
\hline
$\epsilon$ & $10^{-6}$ &  tolerance of the gradient norms\\

$\epsilon_c$ & $10^{-13}$ &  constraint violation tolerance \\  

$\delta$ & $10^{-4}$  & non-monotone line search Armijo-Wolfe constants \\

$m$  & $2$ & \makecell[l]{backward integer of non-monotone line search for conse \\ -cutive previous function values}\\

$\sigma$ & $0.2$ & step size shrinkage factor\\

$t_0$ & $10^{-3}$ & initial step size\\

$t_{min}$ &  $10^{-20}$  & lower threshold of the step size\\

$t_{max}$ &  $1$  & upper threshold of the step size\\

$K$ & $1000$ & maximum number of iterations\\

$T$ & $5$ & \makecell[l]{backward integer for consecutive previous differences of \\ variables and function values}\\
\hline
\end{tabular}
\end{table}

 \subsection{Generalized eigenvalue problem} In this section, we report the numerical results of our algorithm for solving the generalized eigenvalue problem \revise{\cite{10159009}}.   
 Given two symmetric matrices $A\in\mathbb{R}^{n\times n},M$ $\in\mathbb{R}^{n\times n}$, the generalized eigenvalue problem for finding the top-$p$ large eigenvalues is defined as
    $$\begin{cases}
        \max\limits_{X \in \mathbb{R}^{n \times p}} \quad & \operatorname{tr}(X^{\top} A X), \\
        \quad \operatorname{s.t.} \quad & X^{\top}M X = I_p.
    \end{cases} $$
In our experiments, we consider two types of $A$. One is $A = \operatorname{diag}\revise{(1,2,\dots,n)}$ and the other is $A = D^{\top}D$, where $D$ is generated randomly by $\operatorname{randn}(n)$. The process of generating $M$ is as follows: Firstly, a test data matrix $Y$ is randomly generated, where $S = 1000$; And then $M$ is computed by using the formula $M = Y^{\top}Y/S + I_n$. All initial points $X_0$ are feasible and generated randomly.

\begin{table}[htbp]
\centering
\renewcommand{\tablename}{Table}
\caption{Performance of different algorithms for the generalized eigenvalue problem}
\label{tab:table_2}
\begin{tabular}{ccccccc}
\multicolumn{7}{c}{Case of fixed data matrix $A$}\\
\hline
Algorithm & obj & nrmGrad & rnrmGrad & itr & nfe & cpu \\
\hline
\multicolumn{7}{c}{$n = 200$ and $p=5$}\\ 
Algor1.a & $5.814e+02$ &	$7.40e-06$ & $1.37e-07$ & 127.7 & 187.5 & \textbf{0.016}\\

Algor1.b & $5.814e+02$ & $8.24e-06$ & $1.52e-07$ & 121.6 & 175.3 & \textbf{0.016}\\
CGcholQR & $5.814e+02$ & $7.65e-06$ & $1.41e-07$ & 122.6 & 178.7 & 0.018\\
CGpol & $5.814e+02$ & $7.17e-06$ & $1.32e-07$ & 122.6 & 179.6 & 0.023\\
CGCayley & $5.814e+02$ & $6.80e-06$ & $1.25e-07$ & 123.8 & 181.3 & 0.173\\ \hline
\multicolumn{7}{c}{$n = 500$ and $p=5$}\\ 
Algor1.a & $1.729e+03$ &	$7.94e-06$ & $6.39e-08$ & 156.1 & 232.4 & \textbf{0.071}\\

Algor1.b & $1.729e+03$ & $7.67e-06$ & $6.18e-08$ & 158.2 & 235.3 & \textbf{0.072}\\
CGcholQR & $1.729e+03$ & $8.53e-06$ & $6.87e-08$ & 167.6 & 255.7 & 0.074\\
CGpol & $1.729e+03$ & $8.00e-06$ & $6.44e-08$ & 156.7 & 237.0 & 0.081\\
CGCayley & $1.729e+03$ & $7.93e-06$ & $6.38e-08$ & 161.1 & 245.5 & 2.654\\ \hline
\multicolumn{7}{c}{$n = 1000$ and $p=5$}\\ 
Algor1.a & $4.025e+03$ & $7.70e-06$ & $3.34e-08$ & 226.6 & 346.3 & \textbf{0.455}\\

Algor1.b & $4.025e+03$ & $7.93e-06$ & $3.45e-08$ & 232.5 & 362.7 & \textbf{0.465} \\

CGcholQR & $4.025e+03$ & $8.35e-06$ & $3.63e-08$ & 238.0  & 367.5 & 0.492 \\

CGPol & $4.025e+03$ & $7.20e-06$ & $3.13e-08$ & 237.1 & 374.1 & 0.467 \\ 

CGCayley & $4.025e+03$ & $8.73e-06$ & $3.79e-08$ & 232.8 & 358.7 & 18.753\\  \hline
\multicolumn{7}{c}{$n = 1000$ and $p=10$}\\
 Algor1.a & $7.976e+03$ & $8.22e-06$ & $2.52e-08$ & 341.8 & 543.0 & \textbf{1.067}\\

Algor1.b & $7.976e+03$ & $7.41e-06$ & $2.27e-08$ & 366.4 & 579.8 & \textbf{1.130} \\

CGcholQR & $7.976e+03$ & $7.56e-06$ & $2.32e-08$ & 365.2  & 580.6 & 1.157 \\

CGPol & $7.976e+03$ & $8.07e-06$ & $2.47e-08$ & 350.7 & 567.2 & 1.201 \\ 

CGCayley & $7.976e+03$ & $8.30e-06$ & $2.54e-08$ & 347.8 & 546.6 & 29.698\\  \hline
\multicolumn{7}{c}{$n = 2000$ and $p=5$}\\ 
Algor1.a & $9.022e+03$ & $8.60e-06$ & $2.06e-08$ & 283.0 & 440.2 & \textbf{3.759}\\

Algor1.b & $9.022e+03$ & $7.59e-06$ & $1.82e-08$ & 301.2 & 475.3 & \textbf{4.006} \\

CGcholQR & $9.022e+03$ & $8.36e-06$ & $2.00e-08$ & 313.4  & 495.5 & 4.178 \\

CGPol & $9.022e+03$ & $8.43e-06$ & $2.02e-08$ & 324.1 & 530.3 & 4.500 \\ 

CGCayley & $9.022e+03$ & $8.02e-06$ & $1.92e-08$ & 299.5 & 469.1 & 1 48.205\\ 
\hline
\end{tabular}
\end{table}

\begin{table}[htbp]
\centering
\renewcommand{\tablename}{Table}
\caption{Performance of different algorithms for the generalized eigenvalue problem}
\label{tab:table_3}
\begin{tabular}{ccccccc}
\multicolumn{7}{c}{Case of random data matrix $A$}\\
\hline
Algorithm & obj & nrmGrad & rnrmGrad & itr & nfe & cpu  \\
\hline
\multicolumn{7}{c}{$n = 200$ and $p=5$}\\
Algor1.a & $1.999e+03$ & $8.29e-06$ & $4.36e-08$ & 88.3 & 123.4 & \textbf{0.012} \\

Algor1.b & $1.999e+03$ & $7.32e-06$ & $3.85e-08$ & 94.7 & 133.1 & \textbf{0.013} \\

CGcholQR & $1.999e+03$ & $8.13e-06$ & $4.29e-08$ & 98.3  & 141.6 & 0.013 \\

CGPol & $1.999e+03$ & $6.54e-06$ & $3.44e-08$ & 92.3 & 128.6 & 0.018 \\ 

CGCayley & $1.999e+03$ & $7.39e-06$ & $3.88e-08$ & 92.5 & 128.9 & 0.124 \\ 
\hline
\multicolumn{7}{c}{$n = 500$ and $p=5$}\\ 
Algor1.a & $5.840e+03$ & $7.45e-06$ & $1.74e-08$ & 118.2 & 169.9 & \textbf{0.055} \\

Algor1.b & $5.840e+03$ & $8.14e-06$ & $1.91e-08$ & 120.8 & 174.0 & \textbf{0.054} \\

CGcholQR & $5.840e+03$ & $6.88e-06$ & $1.61e-08$ & 126.0  & 182.9 & 0.057 \\

CGPol & $5.840e+03$ & $8.23e-06$ & $1.93e-08$ & 125.2 & 183.4 & 0.064 \\ 

CGCayley & $5.840e+03$ & $8.53e-06$ & $1.99e-08$ & 124.5 & 154.2 & 1.910 \\ \hline
\multicolumn{7}{c}{$n = 1000$ and $p=5$}\\
Algor1.a & $1.353e+04$ & $7.90e-06$ & $9.95e-09$ & 221.5 & 344.3 & \textbf{0.478} \\

Algor1.b & $1.353e+04$ & $7.66e-06$ & $9.66e-09$ & 213.6 & 338.4 & \textbf{0.447} \\

CGcholQR & $1.353e+04$ & $8.65e-06$ & $1.09e-08$ & 226.2  & 352.9 & 0.481 \\

CGPol & $1.353e+04$ & $8.24e-06$ & $1.04e-08$ & 236.1 & 385.3 & 0.484 \\ 

CGCayley & $1.353e+04$ & $8.04e-06$ & $1.01e-08$ & 221.6 & 340.2 & 17.182 \\ \hline
\multicolumn{7}{c}{$n = 1000$ and $p=10$}\\
Algor1.a & $2.647e+04$ & $8.59e-06$ & $7.66e-09$ & 181.4 & 262.4 & \textbf{0.568} \\

Algor1.b & $2.647e+04$ & $7.10e-06$ & $6.33e-09$ & 187.3 & 286.3 & \textbf{0.586} \\

CGcholQR & $2.647e+04$ & $7.78e-06$ & $6.93e-09$ & 185.1  & 282.8 & 0.587 \\

CGPol & $2.647e+04$ & $8.55e-06$ & $7.63e-09$ & 197.6 & 321.0 & 0.691 \\ 

CGCayley & $2.647e+04$ & $7.78e-06$ & $6.95e-09$ & 192.6 & 284.7 & 15.433 \\ \hline
\multicolumn{7}{c}{$n = 2000$ and $p=5$}\\ 
Algor1.a & $3.144e+04$ & $8.73e-06$ & $5.99e-09$ & 295.7 & 475.5 & \textbf{3.859} \\

Algor1.b & $3.144e+04$ & $6.82e-06$ & $4.69e-09$ & 263.0 & 413.6 & \textbf{3.434} \\

CGcholQR & $3.144e+04$ & $7.37e-06$ & $5.06e-09$ & 295.3  & 482.4 & 3.938 \\

CGPol & $3.144e+04$ & $7.95e-06$ & $5.46e-09$ & 304.3 & 529.7 & 4.238 \\ 

CGCayley & $3.144e+04$ & $6.80e-06$ & $4.66e-09$ & 273.6 & 423.9 & 131.266 \\
\hline
\end{tabular}
\end{table}

Table \ref{tab:table_2} and Table \ref{tab:table_3} show the average results of these algorithms after 10 random tests for the generalized eigenvalue problem. It should be noted that ``Algor1.a" and ``Algor1.b" represent the Algorithm \ref{alg:cg} using vector transports \eqref{TranZk2} and \eqref{Transport} respectively; ``CGcholQR", ``CGPol" and ``CGCayley" represents Cholesky QR-based decomposition, polar-based retractions \cite{29}, and Cayley transform \cite{34} on the generalized Stiefel manifold, and Riemannian conjugate gradient method using traditional vector transport, which is defined by the project \eqref{gradf}. ``obj" represents the objective function value, ``nrmGrad" means the norm of the gradient $\left\|\operatorname{grad} f(X)\right\|_X$, ``rnrmGrad" means the relative norm of the gradient, ``itr" represents the number of iterations, ``nfe" means the number of function evaluations, ``cpu" means the running time, ``feasi" represents the feasibility $\left\| X^{\top}MX - I \right\|_F$.

As can be seen from Table \ref{tab:table_2} and Table \ref{tab:table_3}, for the generalized eigenvalue problem, when the matrix $A$ is fixed, ``Algor1.a" and ``Algor1.b" run slightly faster than ``CGcholQR", ``CGPol" and ``CGCayley". It is worth noting that ``CGCayley" is apparently not on the same level as other methods. We think the main reason is that the retraction formula used in the numerical experiments of the algorithm ``CGCayley" is the original formula \eqref{retrX} rather than the low-rank contraction formula \eqref{scheme1}. In addition, when $A$ is a random matrix, we still get similar results. In general, the two implementations of ``Algor1.a" and ``Algor1.b" perform better than other algorithms.

\subsection{Canonical correlation analysis} 

Our second test problem is the canonical correlation analysis problem \revise{\cite{29}} as follows
$$
\begin{aligned}
&\min_{U \in \mathbb{R}^{m\times p},V \in \mathbb{R}^{n\times p}}\quad -\operatorname{tr} \left(U^{\top} C_{xy}VN\right),\\
&\operatorname{s.t.}\quad U^{\top} Cx U = I, \quad V^{\top} Cy V = I.
\end{aligned}
$$
where $m,n,p$ are integers satisfying $p \leq n \leq m$; $C_{xy} \in \mathbb{R}^{m \times n}$, $C_x \in \mathbb{R}^{m \times m}$ and $C_y \in \mathbb{R}^{n \times n}$ are obtained from the observed data, $C_x$ and $C_y$ are symmetric positive definite matrices, and $N$ is a $p \times p$ diagonal matrix whose diagonal elements \revise{$\mu_1,\mu_2,\dots,\mu_p$} satisfy \revise{$\mu_1>\mu_2>\dots>\mu_p>0$}. In the following, the constraints are converted to $\operatorname{St}_{C_x}(m,p) \times \operatorname{St}_{C_x}(n,p)$ and denoted as a manifold $\mathcal{M}$. Then, manifold $\mathcal{M}$ has the following Riemannian metric:
$$
\left\langle\left(\xi_1, \eta_1\right),\left(\xi_2, \eta_2\right)\right\rangle_{(U, V)}=\operatorname{tr}\left(\xi_1^{\top} C_x \xi_2+\eta_1^{\top} C_y \eta_2\right), \quad\left(\xi_1, \eta_1\right),\left(\xi_2, \eta_2\right) \in T_{U,V} \mathcal{M},
$$
The corresponding objective function is
$$f(U,V) = -\operatorname{tr}\left(U^{\top} C_{xy} V N\right)$$ 
and its gradient can be written as 
$$G = \nabla f(U, V)=\left(-C_{x y} V N,-C_{x y}^{\top} U N\right) .$$
so $\tilde{G} = \left(-C_{x}^{-1}C_{x y} V N,-C_{y}^{-1}C_{x y}^{\top} U N\right)$, then the Riemannian gradient $\operatorname{grad}f(U,$ $V) \in T_{(U,V)}$ $\mathcal{M}$ is obtained by
$$\operatorname{grad} f(U,V) = \operatorname{Proj}_{(U,V)}\left(\tilde{G}\right),$$
where $\operatorname{Proj}_{(U,V)}$ is the projection onto tangent space $T_{(U,V)}\mathcal{M}$. Specifically, for $(\xi,\eta) \in T_{(U,V)}\mathcal{M}$, we have $\operatorname{Proj}_{(U,V)}(\xi,\eta) = (\operatorname{Proj}_U(\xi),\operatorname{Proj}_V(\eta))$, with $\operatorname{Proj}_U$ and $\operatorname{Proj}_V$ being the projections onto tangent spaces $T_U \operatorname{St}_{C_x}(m,p)$ and $T_V \operatorname{St}_{C_y}(n,p)$, respectively (see \eqref{proj}). We construct $C_{xy},C_x,C_y$ as follows: We first randomly generate the test data matrix $X_0 \in \mathbb{R}^{T\times m}$ and $Y_0 \in \mathbb{R}^{T\times n}$ by using $randn$, where $T = 1000$. Let $C_x = X_0^{\top}X_0/T$, $C_y = Y_0^{\top}Y_0/T$ be the estimates of auto-covariance matrices $C_x$ and $C_y$ respectively, and let $C_{xy}=X_0^{\top}Y_0 / T$ be the estimate of cross-covariance matrix $C_{xy}$.
\begin{table}[htbp]
\centering
\renewcommand{\tablename}{Table}
\caption{Performance of different algorithms for CCA problem}
\label{tab:table_4}
\begin{tabular}{ccccccc}
\hline
Algorithm & obj & nrmGrad & rnrmGrad & itr & nfe & cpu \\
\hline
\multicolumn{7}{c}{m = 1000, n = 100, p = 10 and N = diag(\revise{2.0,$\dots$,1.2,1.1})}\\
Algor1.a & $-1.550e+01$ & $6.92e-06$ & $1.33e-06$ & 20.5 & 22.5 & \textbf{0.075} \\

Algor1.b & $-1.550e+01$ & $7.24e-06$ & $1.39e-06$ & 20.8 & 22.8 & \textbf{0.078} \\

CGcholQR & $-1.550e+01$ & $7.94e-06$ & $1.53e-06$ & 21.2  & 23.2 & 0.084 \\

CGPol & $-1.550e+01$ & $7.41e-06$ & $1.43e-06$ & 21.0 & 23.0 & 0.081 \\ 

CGCayley & $-1.550e+01$ & $7.45e-06$ & $1.43e-06$ & 29.6 & 37.2 & 2.028 \\ 
\hline
\multicolumn{7}{c}{m = 1000, n = 200, p = 50 and N = diag(\revise{6.0,$\dots$,1.2,1.1})}\\
Algor1.a & $-1.775e+02$ & $7.40e-06$ & $2.55e-07$ & 32.7 & 39.3 & \textbf{0.513} \\

Algor1.b & $-1.775e+02$ & $6.71e-06$ & $2.31e-07$ & 33.1 & 40.3 & \textbf{0.510} \\

CGcholQR & $-1.775e+02$ & $7.10e-06$ & $2.45e-07$ & 38.6  & 42.8 & 0.516 \\

CGPol & $-1.775e+02$ & $7.47e-06$ & $2.57e-07$ & 36.5 & 41.9 & 0.624 \\ 

CGCayley & $-1.775e+02$ & $6.45e-06$ & $2.22e-07$ & 57.9 & 81.5 & 5.317 \\ 
\hline
\multicolumn{7}{c}{m = 1000, n = 500, p = 200 and N = diag(\revise{31.0,$\dots$,11.2,11.1})}\\
Algor1.a & $-4.210e+03$ & $7.68e-06$ & $2.18e-08$ & 27.6 & 31.6 & \textbf{2.870} \\

Algor1.b & $-4.210e+03$ & $3.41e-06$ & $9.68e-09$ & 31.0 & 35.0 & \textbf{3.017} \\

CGcholQR & $-4.210e+03$ & $5.81e-06$ & $1.65e-08$ & 35.2  & 42.2 & 3.114 \\

CGPol & $-4.210e+03$ & $5.36e-06$ & $1.52e-08$ & 35.4 & 48.3 & 4.954 \\ 

CGCayley & $-4.210e+03$ & $7.26e-06$ & $2.06e-08$ & 45.6 & 59.4 & 7.354 \\ 
\hline
\multicolumn{7}{c}{m = 1000, n = 800, p = 100 and N = diag(\revise{21.0,$\dots$,11.2,11.1})}\\
Algor1.a & $-1.605e+03$ & $5.70e-06$ & $2.68e-08$ & 20.0 & 24.0 & \textbf{1.136} \\

Algor1.b & $-1.605e+03$ & $7.07e-06$ & $3.32e-08$ & 20.6 & 24.6 & \textbf{1.121} \\

CGcholQR & $-1.605e+03$ & $7.67e-06$ & $3.61e-08$ & 26.0  & 28.0 & 1.254 \\

CGPol & $-1.605e+03$ & $6.32e-06$ & $2.97e-08$ & 27.3 & 33.4 & 1.616 \\ 

CGCayley & $-1.605e+03$ & $6.78e-06$ & $3.19e-08$ & 30.0 & 35.7 & 4.352 \\ 
\hline
\multicolumn{7}{c}{m = 1000, n = 800, p = 150 and N = diag(\revise{26.0,$\dots$,1.2,11.1})}\\
Algor1.a & $-2.783e+03$ & $6.81e-06$ & $2.27e-08$ & 27.5 & 31.5 & \textbf{2.428} \\

Algor1.b & $-2.783e+03$ & $6.26e-06$ & $2.09e-08$ & 28.9 & 32.9 & \textbf{2.471} \\

CGcholQR & $-2.783e+03$ & $6.20e-06$ & $2.07e-08$ & 32.9  & 38.0 & 2.500 \\

CGPol    & $-2.783e+03$ & $6.82e-06$ & $2.28e-08$ & 34.3 & 46.2 & 3.668 \\ 

CGCayley & $-2.783e+03$ & $6.46e-06$ & $2.16e-08$ & 35.2 & 41.7 & 6.080 \\
\hline
\end{tabular}
\end{table}

Similarly, Table \ref{tab:table_4} shows the average results of these algorithms after 10 random tests for the canonical correlation analysis problem, where the items are the same as those of Table \ref{tab:table_2} and Table \ref{tab:table_3}. Obviously, ``Algor1.a" and ``Algor1.b" are more effective than ``CGcholQR", ``CGPol" and ``CGCayley". In general, the two implementations of ``Algor1.a" and ``Algor1.b" perform better than other algorithms. 

\section{Conclusions}\label{section:6}

In this paper, we introduced two novel vector transport operators, leveraging the Cayley transform, which provides an efficient framework for optimization on the generalized Stiefel manifold under non-standard metrics. These vector transports are demonstrated to satisfy the Ring-Wirth non-expansive condition under non-standard metrics, and one of them is also isometric. Building upon these innovative vector transport operators, we have extended the modified Polak-\revise{Ribi$\grave{e}$re}-Polyak (PRP) conjugate gradient method to the generalized Stiefel manifold. Under a non-monotone line search condition, we have demonstrated the global convergence of our algorithm to a stationary point. The efficiency and practical applicability of the proposed vector transport operators have been empirically validated through numerical experiments involving generalized eigenvalue problems and canonical correlation analysis.

\section*{Declarations}

This research was supported by the Natural Science Foundation of China
with grant 12071398, and the Key Program of National Natural Science of China 12331011. The authors have no relevant financial or non-financial interests to disclose.


\bibliographystyle{elsarticle-num} 
\bibliography{ref}

\begin{thebibliography}{10}
\expandafter\ifx\csname url\endcsname\relax
  \def\url#1{\texttt{#1}}\fi
\expandafter\ifx\csname urlprefix\endcsname\relax\def\urlprefix{URL }\fi
\expandafter\ifx\csname href\endcsname\relax
  \def\href#1#2{#2} \def\path#1{#1}\fi

\bibitem{song2023riemannian}
G.-J. Song, X.-Z. Wang, M.~K. Ng, {R}iemannian conjugate gradient descent
  method for fixed multi rank third-order tensor completion, J. Comput. Appl.
  Math. 421 (2023) 114866.

\bibitem{zhao2022riemannian}
Z.~Zhao, X.-Q. Jin, T.-T. Yao, The {R}iemannian two-step perturbed
  {G}auss--{N}ewton method for least squares inverse eigenvalue problems, J.
  Comput. Appl. Math. 405 (2022) 113971.

\bibitem{montufar2021distributed}
G.~Mont{\'u}far, Y.~G. Wang, Distributed learning via filtered
  hyperinterpolation on manifolds, Found. Comput. Math. (2021) 1--53.

\bibitem{deng2023decentralized}
K.~Deng, J.~Hu, Decentralized projected {R}iemannian gradient method for smooth
  optimization on compact submanifolds, arXiv preprint arXiv:2304.08241 (2023).

\bibitem{grocf}
N.~Boumal, P.-A. Absil, C.~Cartis, {Global rates of convergence for nonconvex
  optimization on manifolds}, IMA J. Numer. Anal. 39~(1) (2019) 1--33.

\bibitem{zhao2023generalized}
Z.~Zhao, X.-Q. Jin, Z.-J. Bai, T.-T. Yao, {A generalized geometric spectral
  conjugate gradient algorithm for finding zero of a monotone tangent vector
  field on a constant curvature Hadamard manifold}, J. Comput. Appl. Math. 422
  (2023) 114882.

\bibitem{HuaAbsGal2018}
W.~Huang, P.-A. Absil, K.~A. Gallivan, {A {R}iemannian {BFGS} method without
  differentiated retraction for nonconvex optimization problems}, SIAM J.
  Optim. 28~(1) (2018) 470--495.

\bibitem{hu2018adaptive}
J.~Hu, A.~Milzarek, Z.~Wen, Y.~Yuan, {Adaptive quadratically regularized
  {N}ewton method for {R}iemannian optimization}, SIAM J. Matrix Anal. Appl.
  39~(3) (2018) 1181--1207.

\bibitem{bortoloti2022efficient}
M.~Bortoloti, T.~A. Fernandes, O.~P. Ferreira, An efficient damped
  {N}ewton-type algorithm with globalization strategy on {R}iemannian
  manifolds, J. Comput. Appl. Math. 403 (2022) 113853.

\bibitem{deng2022manifold}
K.~Deng, Z.~Peng, A manifold inexact augmented {L}agrangian method for
  nonsmooth optimization on {R}iemannian submanifolds in {E}uclidean space, IMA
  J. Numer. Anal. 43~(3) (2023) 1653--1684.

\bibitem{boumal2023intromanifolds}
N.~Boumal, {An Introduction to Optimization on Smooth Manifolds}, Cambridge
  University Press, Cambridge, England, 2023.
\newblock \href {https://doi.org/10.1017/9781009166164}
  {\path{doi:10.1017/9781009166164}}.

\bibitem{sato2021riemannian}
{H. Sato}, {{R}iemannian Optimization and its Applications}, Springer Cham,
  Cham, 2021.
\newblock \href {https://doi.org/doi.org/10.1007/978-3-030-62391-3}
  {\path{doi:doi.org/10.1007/978-3-030-62391-3}}.

\bibitem{hu2020brief}
J.~Hu, X.~Liu, Z.~Wen, Y.~Yuan, {A brief introduction to manifold
  optimization}, J. Oper. Res. Soc. China 8 (2020) 199--248.

\bibitem{36}
{F. Yger, M. Berar, G. Gasso, and A. Rakotomamonjy}, {Oblique principal
  subspace tracking on manifold}, in: 2012 IEEE International Conference on
  Acoustics, Speech and Signal Processing (ICASSP), IEEE, 2012, pp. 2429--2432.

\bibitem{absil2006truncated}
P.-A. Absil, C.~Baker, K.~Gallivan, A truncated-{CG} style method for symmetric
  generalized eigenvalue problems, J. Comput. Appl. Math. 189~(1-2) (2006)
  274--285.

\bibitem{35}
F.~Yger, M.~Berar, G.~Gasso, A.~Rakotomamonjy, {Adaptive canonical correlation
  analysis based on matrix manifolds}, arXiv preprint arXiv:1206.6453 (2012).

\bibitem{deng2022trace}
K.~Deng, Z.~Peng, Trace lasso regularization for adaptive sparse canonical
  correlation analysis via manifold optimization approach, J. Oper. Res. Soc.
  China (2022) 1--27.

\bibitem{14}
R.~A. Fisher, {The use of multiple measurements in taxonomic problems}, Ann.
  Eugen. 7~(2) (1936) 179--188.

\bibitem{zhang2009improved}
L.~Zhang, {An improved Wei--Yao--Liu nonlinear conjugate gradient method for
  optimization computation}, Appl. Math. Comput. 215~(6) (2009) 2269--2274.

\bibitem{13}
A.~Edelman, T.~A. Arias, S.~T. Smith, {The geometry of algorithms with
  orthogonality constraints}, SIAM J. Matrix Anal. Appl. 20~(2) (1998)
  303--353.

\bibitem{12}
M.~P. Do~Carmo, J.~Flaherty~Francis, {{R}iemannian Geometry}, Vol.~6, Springer,
  1992.

\bibitem{1}
P.-A. Absil, R.~Mahony, R.~Sepulchre, {Optimization Algorithms on Matrix
  Manifolds}, Princeton University Press, 2008.

\bibitem{38}
X.~Zhang, J.~Zhu, Z.~Wen, A.~Zhou, {Gradient type optimization methods for
  electronic structure calculations}, SIAM J. Sci. Comput. 36~(3) (2014)
  C265--C289.

\bibitem{7}
N.~Boumal, B.~Mishra, P.-A. Absil, R.~Sepulchre, {Manopt, a Matlab toolbox for
  optimization on manifolds}, J. Mach. Learn. Res. 15~(1) (2014) 1455--1459.

\bibitem{24}
B.~Mishra, R.~Sepulchre, {R}iemannian preconditioning, SIAM J. Optim. 26~(1)
  (2016) 635--660.

\bibitem{22}
J.~Li, L.~Fuxin, S.~Todorovic, {Efficient {R}iemannian optimization on the
  {S}tiefel manifold via the {C}ayley transform}, arXiv preprint
  arXiv:2002.01113 (2020).

\bibitem{34}
Z.~Wen, W.~Yin, {A feasible method for optimization with orthogonality
  constraints}, Math. Program. 142~(1-2) (2013) 397--434.

\bibitem{39}
X.~Zhu, {A {R}iemannian conjugate gradient method for optimization on the
  {S}tiefel manifold}, Comput. Optim. Appl. 67 (2017) 73--110.

\bibitem{29}
H.~Sato, K.~Aihara, {Cholesky QR-based retraction on the generalized Stiefel
  manifold}, Comput. Optim. Appl. 72 (2019) 293--308.

\bibitem{21}
T.~Kaneko, S.~Fiori, T.~Tanaka, {Empirical arithmetic averaging over the
  compact Stiefel manifold}, IEEE Trans. Signal Process. 61~(4) (2012)
  883--894.

\bibitem{8}
S.~Chen, S.~Ma, A.~Man-Cho~So, T.~Zhang, {Proximal gradient method for
  nonsmooth optimization over the Stiefel manifold}, SIAM J. Optim. 30~(1)
  (2020) 210--239.

\bibitem{9}
S.~Chen, S.~Ma, L.~Xue, H.~Zou, {An alternating manifold proximal gradient
  method for sparse principal component analysis and sparse canonical
  correlation analysis}, INFORMS J. Optim. 2~(3) (2020) 192--208.

\bibitem{19}
W.~Huang, K.~Wei, {An Extension of Fast Iterative Shrinkage-thresholding to
  {R}iemannian Optimization for Sparse Principal Component Analysis}, arXiv
  preprint arXiv:1909.05485 (2019).

\bibitem{32}
B.~Shustin, H.~Avron, {{R}iemannian optimization with a preconditioning scheme
  on the generalized Stiefel manifold}, J. Comput. Appl. Math. 423 (2023)
  114953.

\bibitem{sato2023riemannian}
H.~Sato, Riemannian optimization on unit sphere with p-norm and its
  applications, Comput. Optim. Appl. (2023) 1--39.

\bibitem{10159009}
M.~Yamada, H.~Sato, {Conjugate gradient methods for optimization problems on
  symplectic Stiefel Manifold}, IEEE Control Syst. Lett. 7 (2023) 2719--2724.
\newblock \href {https://doi.org/10.1109/LCSYS.2023.3288229}
  {\path{doi:10.1109/LCSYS.2023.3288229}}.

\bibitem{sato2022riemannian}
H.~Sato, Riemannian conjugate gradient methods: General framework and specific
  algorithms with convergence analyses, SIAM J. Optim. 32~(4) (2022)
  2690--2717.

\bibitem{sakai2023global}
H.~Sakai, H.~Sato, H.~Iiduka, Global convergence of {Hager--Zhang} type
  {R}iemannian conjugate gradient method, Appl. Math. Comput. 441 (2023)
  127685.

\bibitem{zhu2021cayley}
X.~Zhu, H.~Sato, {C}ayley-transform-based gradient and conjugate gradient
  algorithms on {G}rassmann manifolds, Adv. Comput. Math. 47 (2021) 1--28.

\bibitem{zhu2020riemannian}
X.~Zhu, H.~Sato, Riemannian conjugate gradient methods with inverse retraction,
  Comput. Optim. Appl. 77 (2020) 779--810.

\bibitem{27}
W.~Ring, B.~Wirth, {Optimization methods on {R}iemannian manifolds and their
  application to shape space}, SIAM J. Optim. 22~(2) (2012) 596--627.

\bibitem{30}
H.~Sato, T.~Iwai, {A new, globally convergent {R}iemannian conjugate gradient
  method}, Optim. 64~(4) (2015) 1011--1031.

\bibitem{28}
H.~Sato, {A {Dai--Yuan-type} {R}iemannian conjugate gradient method with the
  weak {W}olfe conditions}, Comput. Optim. Appl. 64 (2016) 101--118.

\bibitem{betancourt2013general}
M.~Betancourt, {A general metric for {R}iemannian manifold {H}amiltonian
  {M}onte {C}arlo}, in: International Conference on Geometric Science of
  Information, Springer, 2013, pp. 327--334.

\bibitem{barbaresco2006information}
F.~Barbaresco, {Information intrinsic geometric flows}, in: AIP Conference
  Proceedings, Vol. 872, American Institute of Physics, 2006, pp. 211--218.

\bibitem{barbaresco2008interactions}
F.~Barbaresco, {Interactions between symmetric cone and information geometries:
  Bruhat-tits and siegel spaces models for high resolution autoregressive
  doppler imagery}, in: LIX Fall Colloquium on Emerging Trends in Visual
  Computing, Springer, 2008, pp. 124--163.

\bibitem{pennec2006riemannian}
X.~Pennec, P.~Fillard, N.~Ayache, {A {R}iemannian framework for tensor
  computing}, Int. J. Comput. Vision 66 (2006) 41--66.

\bibitem{16}
W.~Huang, {Optimization algorithms on {R}iemannian manifolds with
  applications}, Ph.D. thesis, The Florida State University (2013).

\bibitem{11}
Y.~Dai, Y.~Yuan, {A nonlinear conjugate gradient method with a strong global
  convergence property}, SIAM J. Optim. 10~(1) (1999) 177--182.

\bibitem{deng2011generalization}
C.~Y. Deng, {A generalization of the Sherman--Morrison--Woodbury formula},
  Appl. Math. Lett. 24~(9) (2011) 1561--1564.

\bibitem{horn2012matrix}
R.~A. Horn, C.~R. Johnson, {Matrix Analysis}, Cambridge university press, 2012.

\bibitem{van2000ubiquitous}
C.~F. Van~Loan, {The ubiquitous Kronecker product}, J. Comput. Appl. Math.
  123~(1-2) (2000) 85--100.

\bibitem{zhu2019matrix}
X.~Zhu, C.~Duan, {On matrix exponentials and their approximations related to
  optimization on the Stiefel manifold}, Optim. Lett. 13~(5) (2019) 1069--1083.

\bibitem{higham2008functions}
N.~J. Higham, {Functions of Matrices: Theory and Computation}, SIAM, 2008.

\bibitem{dai2002nonmonotone}
Y.-H. Dai, {On the nonmonotone line search}, J. Optim. Theory Appl. 112 (2002)
  315--330.

\bibitem{iannazzo2018riemannian}
B.~Iannazzo, M.~Porcelli, {The {R}iemannian {Barzilai--Borwein} method with
  nonmonotone line search and the matrix geometric mean computation}, IMA J.
  Numer. Anal. 38~(1) (2018) 495--517.

\bibitem{zhang2006global}
L.~Zhang, W.~Zhou, D.~Li, {Global convergence of a modified Fletcher--Reeves
  conjugate gradient method with Armijo-type line search}, Numer. Math. 104~(4)
  (2006) 561--572.

\bibitem{zhang2004nonmonotone}
H.~Zhang, W.~W. Hager, {A nonmonotone line search technique and its application
  to unconstrained optimization}, SIAM J. Optim. 14~(4) (2004) 1043--1056.

\bibitem{oviedo2022global}
H.~Oviedo, {Global convergence of {R}iemannian line search methods with a
  {Zhang-Hager-type} condition}, Numer. Algorithms 91~(3) (2022) 1183--1203.

\end{thebibliography}

\end{document}